\documentclass[10pt]{amsart}

\usepackage[T1]{fontenc}
\usepackage[utf8]{inputenc}
\usepackage{amsmath,amssymb,amsthm,mathtools}
\usepackage[margin=1in]{geometry}

\usepackage{booktabs,array,longtable,microtype,enumitem}
\usepackage{listings}
\usepackage[hidelinks]{hyperref}
\numberwithin{equation}{section}

\theoremstyle{definition}

\newtheorem{theorem}{Theorem}[section]
\newtheorem{lemma}[theorem]{Lemma}
\newtheorem{proposition}[theorem]{Proposition}
\newtheorem{corollary}[theorem]{Corollary}

\newtheorem{remark}[theorem]{Remark}
\newtheorem{conj}[theorem]{Conjecture}

\newcommand{\PP}{\mathbb P}
\newcommand{\vir}{\mathrm{vir}}
\newcommand{\CC}{\mathbb C}
\newcommand{\ZZ}{\mathbb Z}
\newcommand{\QQ}{\mathbb Q}
\newcommand{\OO}{\mathcal O}

\newcommand{\cA}{\mathcal A}
\newcommand{\Ext}{\operatorname{Ext}}

\newcommand{\ch}{\operatorname{ch}}
\newcommand{\rk}{\operatorname{rk}}
\newcommand{\supp}{\operatorname{supp}}
\newcommand{\Tot}{\operatorname{Tot}}
\newcommand{\DT}{\mathrm{DT}}

\newcommand{\Norm}[1]{\left\lVert #1\right\rVert^2}

\title[A proof of GW/$\mathrm{DT}_4$ conjecture on local projective plane]{A proof of GW/$\textbf{DT}_4$ conjecture \\ on local projective plane}
\author{Yalong Cao}
\address{Morningside Center of Mathematics, Institute of Mathematics \& State Key Laboratory of Mathematical Sciences, Academy of Mathematics and Systems Sciences, Chinese Academy of Sciences, Beijing, China}
\email{yalongcao@amss.ac.cn}

\subjclass[2020]{
Primary
14N35,  
Secondary 
14C17, 
14F05}   
\keywords{Gopakumar-Vafa invariants, Donaldson-Thomas invariants, Calabi-Yau 4-folds}

\begin{document}
\begin{abstract}
We prove a closed formula for $\mathrm{DT}_4$ invariants of one dimensional stable sheaves on the total space of
$\mathcal{O}_{\mathbb{P}^2}(-1)\oplus\mathcal{O}_{\mathbb{P}^2}(-2)$ for all degree curve classes. During the proof, orientations 
of moduli spaces are chosen in a canonical way. 
Combining with Gopakumar-Vafa type invariants proposed and computed by
Klemm--Pandharipande, we verify a conjecture proposed by the author, Maulik and Toda. 
\end{abstract}
\maketitle


\subsection{Background}
On Calabi--Yau 3-folds, enumerative geometry of curves has been extensively studied from many different perspectives in the past thirty years. 
In 2007, Klemm--Pandharipande \cite{KP} proposed that it is equally interesting to explore a similar story on Calabi-Yau 4-folds. 
They defined \textit{Gopakumar--Vafa} type BPS invariants from Gromov-Witten invariants of compact Calabi-Yau 4-folds $X$ and conjectured their integrality.
Such invariants are nontrivial only in genus $0$ and $1$ by dimensional reason, see \S \ref{sect on GWGV} for details.  

The author, Maulik and Toda \cite{CMT1}
proposed a \textit{sheaf-theoretic} interpretation of the genus zero
Gopakumar--Vafa type invariants $n_{0,\beta}(\gamma)$ of Klemm--Pandharipande \cite{KP}\footnote{There are other sheaf-theoretic interpretations by other moduli spaces, see e.g.~\cite{CMT2, CT1}}.
For a curve class $\beta$ and an insertion $\gamma\in H^4(X)$, the
conjectural correspondence takes the form
\begin{equation}\label{eq:CMT0}n_{0,\beta}(\gamma)=\langle\tau_0(\gamma) \rangle_{\beta}^{\DT_4}, \end{equation}
or equivalently 
\begin{equation}\mathrm{GW}_{0,\beta}(\gamma)
=\sum_{k\geqslant1, k\mid\beta}\frac{1}{k^2}\langle\tau_0(\gamma) \rangle_{\beta/k}^{\DT_4},
\label{eq:CMT}
\end{equation}
where the LHS is the genus zero Gromov-Witten invariant of $X$ with primary insertion $\gamma$, and 
$\langle\tau_0(\gamma) \rangle_{\beta}^{\DT_4}$ is the \textit{primary} $\DT_4$ invariant for one dimensional stable sheaves with curve class $\beta$ and Euler characteristic one. 
Note that $\DT_4$ invariants depend on the choice of \textit{orientation}, see \S \ref{sect on GVDT4} for more details. 

To capture the genus one Gopakumar--Vafa type invariants $n_{1,\beta}$ of \cite{KP}, \textit{descendent} $\DT_4$ invariants $\langle\tau_1(\alpha) \rangle_{\beta}^{\DT_4}$ are required. 
It was conjectured in \cite{CT2} that
there is an equality of functions in $\alpha \in H^2(X)$:
\begin{align}\label{eq:CT2}\langle\tau_1(\alpha) \rangle_{\beta}^{\DT_4}=\frac{n_{0,\beta}(\alpha^2)}{2\,(\alpha\cdot\beta)}
-\sum_{\beta_1+\beta_2=\beta}\frac{(\alpha\cdot\beta_1)(\alpha\cdot\beta_2)}{4\,(\alpha\cdot\beta)}m_{\beta_1,\beta_2}
-\sum_{k\geqslant1,\,k|\beta}\frac{(\alpha\cdot\beta)}{k}\,n_{1,\beta/k},  \end{align}
where $m_{\beta_1,\beta_2}$ are meeting invariants, 
which can be inductively deduced from genus 0 invariants $n_{0,\beta}(-)$. 
Combining \eqref{eq:CMT0}, \eqref{eq:CT2} and M\"obius inversion formula, we obtain a sheaf theoretic interpretation of genus one Gopakumar--Vafa type invariants $n_{1,\beta}$ in terms of  
$\DT_4$ invariants. 

In this paper, we consider the (4-fold version of) local projective plane: 
\begin{equation}
\pi:X=\Tot_{\PP^2}\bigl(\OO(-1)\oplus\OO(-2)\bigr)\rightarrow\PP^2.
\label{eq:X}
\end{equation}
The relevant moduli spaces of stable maps and stable sheaves are proper, although $X$ is not, so the conjectural correspondence still makes sense.
One advantage of working with Calabi-Yau 4-fold of type $X=\Tot(K_Y)$, the total space of canonical bundle of a Fano 3-fold\,\footnote{Here we do not require $Y$ to be proper.} $Y$, is that moduli stacks of coherent sheaves have \textit{canonical choice} of orientations, e.g. \cite[Thm.~4.3]{CZZ}. Therefore we can get rid of difficulty caused by both existence and choice of orientations \cite{CGJ1, CGJ2, JU, K}. This makes finding exact closed formula of invariants possible. 

Moreover, for moduli spaces of one dimensional stable sheaves, the correct choice of orientation which makes \eqref{eq:CMT} work 
is conjectured in \cite[Eqn.\,(0.1)]{Cao}. That is given by the canonical choice of orientation from presenting $X$ as $\Tot(K_Y)$ with correction made by a
sign depending on the pairing of the first Chern class of $Y$ and the curve class. 

\subsection{Main result}
There are computations in small degree curve classes for different geometries supporting conjectures \eqref{eq:CMT}, \eqref{eq:CT2}, e.g.~\cite{CMT1, Cao1, CLW, CW}.
The main result of this paper explicitly computes both $\langle\tau_0(\gamma) \rangle_{d}^{\DT_4}$, $\langle\tau_1(\alpha) \rangle_{d}^{\DT_4}$
for local $\PP^2$ \eqref{eq:X} in all degree curve classes, and verifies \eqref{eq:CMT} in all degree curve classes, and \eqref{eq:CT2} when $d\leqslant 100$.
\begin{theorem}(Theorem \ref{thm on g=0})
With orientation proposed by \cite{Cao}, for any $d\geqslant 1$, we have 
$$\big\langle\tau_0(\pi^*H^2) \big\rangle_{d}^{\DT_4}=\frac{1}{2d^2}\sum_{k\geqslant1, k\mid d}
\mu(k)(-1)^{d/k}\binom{2d/k}{d/k},$$
where $H$ is the hyperplane class on $\PP^2$, $\mu$ is the M\"obius function.
In particular, \eqref{eq:CMT} holds.
\end{theorem}

\begin{theorem}(Theorem \ref{thm on g=1}, Corollary \ref{cor on d100})
With orientation proposed by \cite{Cao}, for any $d\geqslant 1$, we have 
$$\langle\tau_1(\pi^*H) \rangle_{d}^{\DT_4}=(-1)^{d+1}\sum_{a\in \cA_d}\left(
2\sum_{r=1}^{d-1}\sum_{i=0}^{r-1}(a_i+1)-\frac12\sum_{i=0}^{d-1} (a_i+1)^2-(d-1)
\right),
$$
where $\cA_d$ is a finite set given by 
$$\cA_d=\left\{a=(a_0,\ldots,a_{d-1})\in\ZZ^d\, \bigg|
\begin{array}{l}
\sum_{i=0}^{d-1} a_i=1-d,\quad a_{i+1}\geqslant a_i-1\ (0\leqslant  i<d-1),\\
\sum_{i=j}^{d-1}(a_i+1)\leqslant 0\quad(1\leqslant  j<d)
\end{array}\right\}.
$$
In particular, we verify \eqref{eq:CT2} for $d\leqslant 100$.
\end{theorem}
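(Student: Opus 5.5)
Our plan is to compute $\langle\tau_1(\pi^*H)\rangle_d^{\DT_4}$ by torus localization on the moduli space $M_d$ of one dimensional stable sheaves $F$ on $X$ with $[F]=d[\PP^1]$ and $\chi(F)=1$. Since $\OO(-1)\oplus\OO(-2)$ has no sections along lines of positive degree, any such sheaf is set-theoretically supported on the zero section. We will use the description $X=\Tot_Y(K_Y)$ with $Y=\Tot_{\PP^2}(\OO(-1))$ to identify $M_d$ with the corresponding moduli space on $Y$. The canonical orientation of \cite{CZZ}, corrected by the sign $(-1)^{c_1(Y)\cdot\beta+\cdots}$ of \cite{Cao}, turns the $\DT_4$ virtual class into the signed Behrend/virtual class of the 3-fold moduli space. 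Concretely, $[M_d]^{\vir}=\pm[M_d^Y]^{\vir}_{3}$, which has dimension $0$ over $Y$ in the appropriate sense, and we then integrate $\tau_1(\pi^*H)=\pi_{M*}(\pi^*H\cdot \ch_3(\mathbb F))$.

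We carry this out equivariantly. We use the torus $T=(\CC^*)^2\subset (\CC^*)^3$ acting on $\PP^2$ and on the fiber of $Y$, preserving the CY form of $X$ (or a Calabi--Yau subtorus). The $T$-fixed stable sheaves are then pushed from thickenings of $T$-invariant lines. After imposing stability and $\chi=1$, we expect the fixed locus relevant to $\pi^*H$-insertions to consist of sheaves supported on a single invariant line $L\cong\PP^1$, thickened in the $\OO(-1)$ fiber direction of $Y$ restricted to $L$. These are filtered by line bundles $\OO_L(a_0),\dots,\OO_L(a_{d-1})$, the graded pieces of the $I$-adic filtration along the normal direction.

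The conditions defining $\cA_d$ should come from three requirements. First, $\chi=1$ gives $\sum(a_i+1)=1$, i.e.\ $\sum a_i=1-d$. Second, the multiplication-by-fiber-coordinate maps $\OO_L(a_i)\otimes N^\vee\to\OO_L(a_{i+1})$ must be nonzero, which gives $a_{i+1}\geqslant a_i-1$ since the conormal has degree $1$ there. Third, stability tested on the quotients $F\to F_{\geqslant j}$ gives the inequalities $\sum_{i\geqslant j}(a_i+1)\leqslant 0$. We then show that each $a\in\cA_d$ contributes an isolated reduced fixed point, or, more precisely, a fixed component whose equivariant tangent-obstruction weights cancel up to a sign. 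This sign is computed as $(-1)^{d+1}$ using the orientation of \cite{Cao}.

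The contribution of the insertion $\ch_3(\mathbb F)\cdot\pi^*H$ is computed by Grothendieck--Riemann--Roch on each graded piece. The term $\frac12\sum(a_i+1)^2$ comes from $\ch_3$ of $\OO_L(a_i)$ with twisting by normal weights, the double sum $2\sum_r\sum_{i<r}(a_i+1)$ comes from the positions of successive pieces in the thickening, and $-(d-1)$ comes from the Todd correction. Summing these over $\cA_d$ gives the formula.

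The main obstacle is the sign and weight bookkeeping at fixed points. We must show that the localization contribution $e(\mathrm{Ob})/e(T^{vir})$ reduces to exactly $\pm1$ times the $\ch_3$ pairing, and that non-isolated or non-line-supported fixed loci contribute zero. For this we will first use the half-Euler class and the cosection/$\pi^*H$ dimension argument to kill loci supported on several lines. Finally, the comparison with \eqref{eq:CT2} for $d\leqslant 100$ is a finite computer check against Klemm--Pandharipande's $n_{0,d},n_{1,d}$ and the meeting invariants, which we will run directly.
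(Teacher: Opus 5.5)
There is a genuine gap. The two steps that carry the content of the theorem, isolating the contributing fixed points and evaluating their contributions, are asserted rather than proved, and the geometry you give for the survivors is wrong.

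\textbf{Isolating the contributing fixed points.} You need to kill every other $T$-fixed component, and these are in general positive-dimensional. Appealing to a half-Euler class or a cosection is not an argument. What is needed is a comparison of $\rk(V_d|_B)^T$ with $\dim B$ on every component $B$. Note also that this must dispose of more than sheaves on several lines: it must also dispose of sheaves on the single line $L_0$, and of sheaves on $L_1$ or $L_2$ whose weights do not form a single string.

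The paper's key lemma supplies exactly this comparison. It proves the identity
$1-\dim\Ext^1(F,F)^T+\rk(V_d|_B)^T=\tfrac12\bigl(\|P_1(F)\|^2+\|P_2(F)\|^2\bigr)$.
The proof uses $\chi_T(F,F)-\chi_T(F,F(-1))=\chi_T(G,G)$ with $G=\mathbb L\iota_{L_0}^*F$, followed by localization on $L_0$. Combined with $\|P_i(F)\|^2\geq 2$ whenever $p_i\in\supp F$, it gives $\rk(V_d|_B)^T>\dim B$, hence $e_T(V_d|_B)=0$. This holds on every component except those supported on $L_1$ or $L_2$ with norm $2$. That norm condition is also what forces the $\CC^*_u$-eigenbundles of $q_*F$ to be line bundles with consecutive weights. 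So it is the actual justification for your chain $\OO_L(a_0),\dots,\OO_L(a_{d-1})$ and for isolatedness.

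\textbf{The thickening direction.} Stable sheaves are scheme-theoretically supported on the zero section $\PP^2$, since $\operatorname{Hom}(F\otimes\pi^*\OO(1),F)=0$ by slope. So they cannot be thickened in the $\OO(-1)$-fiber direction of $Y$. The thickening is along $N_{L/\PP^2}=\OO_L(1)$, whose conormal has degree $-1$. This gives arrows $\OO(a_i)\to\OO(a_{i+1}+1)$ and hence $a_{i+1}\geq a_i-1$. With a conormal of degree $+1$, as you state, you would get $a_{i+1}\geq a_i+1$, contradicting your own inequality. Also, $E_j\oplus\cdots\oplus E_{d-1}$ is a subsheaf, not a quotient; testing stability on quotients reverses the inequality.

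\textbf{Evaluating the contributions.} This step is also misdescribed.

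The virtual dimension is $1$, not $0$. At a surviving point, $[T_FM_d]-[V_{d,F}]=\sum_{j=1}^{d}n^{-j}-\sum_{j=1}^{d-1}n^{j}$ with $c_1^T(n)=s-t$. Hence $e_T(T_FM_d)/e_T(V_{d,F})=(-1)^d d(s-t)$. A single point therefore contributes the rational function $-(Us+Vt)/\bigl((-1)^d d(s-t)\bigr)$, where $Us+Vt=\tau_1(H)|_F$.

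So no individual point gives ``$\pm1$ times a $\ch_3$ pairing''. Only the sum with its partner on the other line, obtained by $s\leftrightarrow t$, is a number, namely $(-1)^{d+1}(U-V)/d$. The sign $(-1)^{d+1}$ is the orientation sign $-1$ times the $(-1)^d$ coming from these weights, not an orientation sign alone.

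\textbf{Normalization of the universal sheaf.} To obtain $U$ and $V$ you must use the normalized universal sheaf, which your sketch omits. Without it the integral depends on the choice of $\mathbb F$. On $\PP^2$ one has $\tau_1(\pi^*H)=\pi_{M*}\bigl(H\ch_2(\mathbb F)+\tfrac32H^2\ch_1(\mathbb F)\bigr)-d\,c_1(\det\mathbb R\pi_{M*}\mathbb F)$. Both pieces are computed from the equivariant resolution $0\to\bigoplus_i\OO(a_i-1)\nu_iu\to\bigoplus_i\OO(a_i)\nu_i\to F\to0$.

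This yields $(U-V)/d=d-\tfrac32-2S_1(a)-\tfrac12S_2(a)$, where $S_1=\sum_i i(a_i+1)$ and $S_2=\sum_i a_i(a_i+1)$. That expression rewrites as the bracket in the statement. Your attribution of its three terms to separate sources is a guess, not a derivation.
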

The verification of \eqref{eq:CT2} is through 
comparison with genus one GV type invariants (that are computed via the recursion formula \eqref{d:eq:genus1} and \cite[\S 3.2]{KP}), 
which we are only able to do for finite degrees at present.

\subsection{Outline of the proof}
By a stability argument, one dimensional stable sheaves on \eqref{eq:X} are scheme theoretically supported on the zero section. 
From that we know the moduli space is smooth, and the $\DT_4$ virtual class is the Euler class of a vector bundle $V_d$ \eqref{equ on Md vir class}.

To compute the integral, we use torus localization to reduce it to the torus fixed loci. However, torus fixed loci for one dimensional stable 
sheaves are in general \textit{not} isolated and not easy to work with combinatorically, compared to ideal sheaves or stable pairs, see e.g.~\cite{CM, Ko}.  

The key argument we employ is a vanishing result saying that only components whose elements $F$ are supported only on the line\footnote{Here we linearize $\OO(-1)$ as $\OO(-L_0)$.} $L_1=\{x_1=0\}$ or $L_2=\{x_2=0\}$ in $\PP^2$ 
contribute nontrivially to the invariant (Proposition \ref{prop:vanishing}). Here the Calabi-Yau 4-fold geometry plays a key role where we use an identity relating the rank of the torus fixed subbundle of $V_d$ and dimension 
of the component (Lemma \ref{lem:boundary}). 

Importantly, the remaining components 
are finitely many reduced points (Proposition \ref{prop:classification}), which are in one to one correspondence to the orbits of
break divisors in graphs (Proposition \ref{prop:bijection}). Then the $\DT_4$ invariant equals to an (overall) sign count 
of that finite set (Proposition \ref{prop:localization}), which we use the orbit counting theorem of Konvalinka-Reineke-Tewari \cite[Thm.~3.7]{KRT} to explicitly determine (Theorem \ref{thm:KRT}). 
Similarly for the case of descendent $\DT_4$ invariants, see \S \ref{sect on proof of thm g=1}. 

Because of the connection to the work of \cite{KRT}, which are related to DT invariants of 2-loop quivers, we give an explanation to the phenomenon that 
$\DT_4$ invariants interact with quiver descriptions as observed by \cite[\S 1.4.3]{BBvG} through torus fixed point analysis.

\subsection{Related geometry}
There are related Calabi-Yau 4-folds where one can apply similar analysis to calculate their $\DT_4$ invariants.  
One is given by the local $\PP^1\times \PP^1$:
$$\Tot_{\PP^1\times \PP^1}\bigl(\OO(-1,-1)\oplus\OO(-1,-1)\bigr), $$
where one can show \eqref{eq:CMT} for all degree curve classes and verify \eqref{eq:CT2} in finite degrees, 
by comparing the computations with \cite[\S 3.3]{KP}.

Another is the cotangent bundle $T^*\PP^2$ of $\PP^2$, where the existence of symplectic structure forces the ordinary invariants vanish. One can compute its (nontrivial) reduced $\DT_4$
invariants and verify a reduced version of \eqref{eq:CMT} (see \cite[Conj.\,0.5 (i), \S 6.2]{COT} for the formulation, and computations in small degrees).

We leave the details to a future writing, or an interested reader to check.

\subsection*{Acknowledgments}
The author is grateful to Davesh Maulik and Yukinobu Toda for the related discussions and collaborations, 
without which the GW/$\DT_4$ conjecture would not be known to the author.
He also thanks Martijn Kool for many years of discussions on torus localization and vertex formalism in DT theory.  

\subsection*{Statements and Declarations}
We have no conflicts of interest to disclose.

\subsection*{Declaration on the use of AI} The author used ChatGPT-6 Astra to search references, compute examples and discuss proof strategy in this work. 
The paper is written by the authors, who verified all  arguments and references, and take full responsibility for the content.

\section{GW/GV/$\DT_4$ correspondences}

\subsection{GW/GV correspondence}\label{sect on GWGV}

Let $X$ be a smooth projective Calabi-Yau 4-fold.
The genus 0 Gromov-Witten invariants on $X$ are defined using
insertions: for $\gamma \in H^{4}(X, \mathbb{Z})$,
one defines
\begin{equation}
\mathrm{GW}_{0, \beta}(\gamma)
=\int_{[\overline{M}_{0, 1}(X, \beta)]^{\rm{vir}}}\mathrm{ev}^{\ast}(\gamma)\in\mathbb{Q},
\nonumber \end{equation}
where $\mathrm{ev} \colon \overline{M}_{0, 1}(X, \beta)\to X$
is the evaluation map.

The \textit{genus 0 Gopakumar-Vafa type invariants} 
\begin{align}\label{g=0 GV}
n_{0, \beta}(\gamma) \in \mathbb{Q}
\end{align}
are defined by Klemm-Pandharipande \cite{KP} from the identity
\begin{align*}
\sum_{\beta>0}\mathrm{GW}_{0, \beta}(\gamma)q^{\beta}=
\sum_{\beta>0}n_{0, \beta}(\gamma) \sum_{d=1}^{\infty}
d^{-2}q^{d\beta}.
\end{align*}
For the genus 1 case, virtual dimensions of moduli spaces of stable maps are zero, so Gromov-Witten invariants
\begin{align*}
\mathrm{GW}_{1, \beta}=\int_{[\overline{M}_{1, 0}(X, \beta)]^{\rm{vir}}}
1 \in \mathbb{Q}
\end{align*}
can be defined
without insertions.
The \textit{genus 1 Gopakumar-Vafa type invariants}
\begin{align}\label{g=1 GV}
n_{1, \beta} \in \mathbb{Q}
\end{align}
 are defined in~\cite{KP} by the identity
\begin{align}\label{equ on GW1}
\sum_{\beta>0}
\mathrm{GW}_{1, \beta}q^{\beta}=
&\sum_{\beta>0} n_{1, \beta} \sum_{d=1}^{\infty}
\frac{\sigma(d)}{d}q^{d\beta}
+\frac{1}{24}\sum_{\beta>0} n_{0, \beta}(c_2(X))\log(1-q^{\beta}) \\ \nonumber
&-\frac{1}{24}\sum_{\beta_1, \beta_2}m_{\beta_1, \beta_2}
\log(1-q^{\beta_1+\beta_2}),
\end{align}
where $\sigma(d)=\sum_{i|d}i$ and $m_{\beta_1, \beta_2}\in\mathbb{Z}$ are called meeting invariants defined as follows.

Take a basis $S_1, \cdots, S_k$ of the free part of $H^4(X, \mathbb{Z})$ and let
\begin{align}\label{kunneth decom}
\sum_{i, j} g^{ij} [S_i \otimes S_j] \in H^8(X \times X, \mathbb{Z})
\end{align}
be the K\"unneth
decomposition of the $(4,4)$ component of the diagonal class (mod torsion).
For $\beta_1, \beta_2 \in H_2(X, \mathbb{Z})$,
the \emph{meeting invariant} 
\begin{align}\label{meeting invs}
m_{\beta_1, \beta_2} \in \mathbb{Z}
\end{align}
is the virtual number of rational curves
of class $\beta_1$ meeting rational curves of
class $\beta_2$. They are uniquely determined by
the following rules: \\

(i) The meeting invariants are symmetric,
$m_{\beta_1, \beta_2}=m_{\beta_2, \beta_1}$. \\

(ii) If either $\deg(\beta_1)\leqslant 0$ or $\deg(\beta_2)\leqslant 0$,
we have $m_{\beta_1, \beta_2}=0$. \\

(iii)
If $\beta_1 \neq \beta_2$, then
\begin{align}\label{meeting inv recursion 1}
m_{\beta_1, \beta_2}=
\sum_{i, j} n_{0, \beta_1}(S_i) g^{ij} n_{0, \beta_2}(S_j)
+m_{\beta_1, \beta_2-\beta_1}+m_{\beta_1-\beta_2, \beta_2}.
\end{align}

(iv)
If $\beta_1=\beta_2=\beta$, we have
\begin{align}\label{meeting inv recursion 2}
m_{\beta, \beta}=n_{0, \beta}(c_2(X))
+\sum_{i, j}n_{0, \beta}(S_i)g^{ij} n_{0, \beta}(S_j)
-\sum_{\beta_1+\beta_2=\beta}
m_{\beta_1, \beta_2}.
\end{align}
In~\cite{KP}, both of the invariants (\ref{g=0 GV}), (\ref{g=1 GV})
are conjectured to be integers, and verified in many examples.

For $g\geqslant2$, GW invariants vanish for dimensional reasons, so a GW/GV type integrality conjecture on compact Calabi-Yau 4-folds makes sense only for genus 0 and 1.

\subsection{GV/$\mathbf{DT_4}$ correspondence}\label{sect on GVDT4}

On the sheaf side, we consider the moduli 
space $M_{\beta}$ of one dimensional stable sheaves $F$ with $[F]=\beta$ and $\chi(F)=1$.

There is a virtual class constructed by \cite{CL} in special cases, and by \cite{BJ, OT} in general: 
$$[M_{\beta}]^{\vir}\in H_2(M_{\beta},\mathbb{Z})\,,$$
provided that an orientation is constructed, see \cite{CGJ1, CGJ2, JU}.
Let
\begin{align}\label{norm univ sheaf}\mathbb{F}_{\mathrm{norm}}:=\mathbb{F}\otimes\pi_M^*(\det(\mathbb{R}\pi_{M*}\mathbb{F}))^{-1} \end{align}
be the normalized universal sheaf, where $\mathbb{F}\to M_{\beta}\times X$ is any universal sheaf.
It is independent of the choice of $\mathbb{F}$ and its push-forward via $\pi_M$ to $M_\beta$ has trivial determinant. 
Set 
\begin{equation}\label{tau gamma}
\tau_0 \colon H^{4}(X, \mathbb{Z})\to H^{2}(M_\beta, \mathbb{Q}), \quad 
\tau_0(\gamma):=(\pi_{M})_{\ast}(\pi_X^{\ast}\gamma \cup\ch_{3}(\mathbb{F}_{\mathrm{norm}})),
\end{equation}
\begin{align*}  
\tau_1 \colon H^{2}(X, \mathbb{Z})\to H^{2}(M_\beta, \mathbb{Q}), \quad 
\tau_1(\alpha):=(\pi_{M})_{\ast}(\pi_X^{\ast}\alpha \cup\ch_{4}(\mathbb{F}_{\mathrm{norm}}) ),
\end{align*}
and define $\DT_4$ invariants with primary insertions:
\begin{align}\label{prim invs}  
\langle\tau_0(\gamma) \rangle_{\beta}^{\DT_4}:=\int_{[M_{\beta}]^{\rm{vir}}}\tau_0(\gamma), \,\,\, \mathrm{where}\,\,\gamma\in H^4(X,\mathbb{Z}), \end{align}
and $\DT_4$ invariants with descendent insertions:
\begin{align}\label{des invs}  
\langle\tau_1(\alpha) \rangle_{\beta}^{\DT_4}:=\int_{[M_{\beta}]^{\rm{vir}}}\tau_1(\alpha), \,\,\, \mathrm{where}\,\,\alpha\in H^2(X,\mathbb{Z}). \end{align}
In the definition of primary $\DT_4$ invariants, one may replace $\mathbb{F}_{\mathrm{norm}}$ by any universal sheaf $\mathbb{F}$ as $\ch_3(\mathbb{F})$ is dual to the fundamental 
class of $\mathbb{F}$.  
\begin{conj}$($\cite[Conj.~0.2]{CMT1}, \cite[Conj.~1.2]{CT2}$)$
\label{main conj}
For a  choice of orientation, the following holds:   
\begin{enumerate}
\item for any $\gamma\in H^4(X)$,
$$\langle\tau_0(\gamma) \rangle_{\beta}^{\DT_4}=n_{0,\beta}(\gamma), $$
\item
there is an equality of functions of $\alpha \in H^2(X)$:
\begin{align*}\langle\tau_1(\alpha) \rangle_{\beta}^{\DT_4}=\frac{n_{0,\beta}(\alpha^2)}{2\,(\alpha\cdot\beta)}
-\sum_{\beta_1+\beta_2=\beta}\frac{(\alpha\cdot\beta_1)(\alpha\cdot\beta_2)}{4\,(\alpha\cdot\beta)}m_{\beta_1,\beta_2}
-\sum_{k\geqslant1,\,k|\beta}\frac{(\alpha\cdot\beta)}{k}\,n_{1,\beta/k},  \end{align*}
where $n_{0,\beta}(-)$ and $n_{1,\beta}$ are genus 0 and 1 GV type invariants \eqref{g=0 GV}, \eqref{g=1 GV} and $m_{\beta_1,\beta_2}$ are meeting invariants \eqref{meeting invs} 
which can be inductively determined by genus 0 GV type invariants,
\item when $X$ is the total space of  canonical bundle of a Fano 3-fold, an explicit choice of orientation in \cite[Eqn.~(0.1)]{Cao} makes the above two equations hold.  
\end{enumerate}
\end{conj}


\subsection{Main results}

In this paper, we focus on the case when 
\begin{equation}
\pi: X=\Tot_{\PP^2}\bigl(\OO(-1)\oplus\OO(-2)\bigr)\longrightarrow\PP^2.
\label{eq:X}
\end{equation}
Let $H=c_1(\OO_{\PP^2}(1))$, $\gamma=\pi^*H^2$, and label curve class $\beta$ by $d\in \mathbb{Z}_{\geqslant 0}$. 
Although $X$ is not projective, the moduli stacks of stable maps and stable sheaves are proper, so the above definitions and conjectures still make sense. 

As noted in \cite[Prop.~3.1]{CMT1}, $M_d$ is isomorphic to the similar moduli space of stable sheaves on $\PP^2$:
$$\pi_*: M_d=M_d(X)\cong M_d(\PP^2)\,,$$
which is smooth and its $\DT_4$ virtual class is the Euler class of the bundle 
$$V_d:=\mathcal{E}xt^1_{\pi_{M}}(\mathbb{F},\mathbb{F}\boxtimes \OO_{\mathbb{P}^2}(-1)), $$
where $\mathbb{F}$ is a universal sheaf on $M_{d}(\PP^2)\times \PP^2$. With the choice of orientation by \cite[Eqn.~(0.1)]{Cao}, 
\begin{equation}\label{equ on Md vir class}[M_d]^{\vir}=-e(V_d)\cap [M_d], \end{equation}
because $(-1)^{c_1(Y)\cdot dH-1}=(-1)^{2d-1}=-1$ when $Y=\Tot_{\PP^2}\OO(-1)$.

For primary insertion, we denote the cycle map 
$$\sigma: M_d\to|\OO_{\PP^2}(d)|,$$
with the pullback hyperplane class 
$$h=\sigma^*c_1\bigl(\OO_{|\OO_{\PP^2}(d)|}(1)\bigr) $$
\begin{lemma}
With the above choice of orientation, we have 
$$\langle\tau_0(\pi^*H^2) \rangle_{d}^{\DT_4}=-\int_{M_d}h\cdot e(V_d), $$
$$\langle\tau_1(\pi^*H) \rangle_{d}^{\DT_4}=-\int_{M_d}\tau_1(\pi^*H)\cdot e(V_d),$$
where 
\begin{equation}\label{equ on tau1}
\tau_1(\pi^*H)=\pi_{M*}\left(H\ch_2(\mathbb{F})+\frac32H^2\ch_1(\mathbb{F})\right)-d\,c_1(\det \mathbb{R}\pi_{M*}\mathbb{F}), \end{equation}
and $\mathbb{F}$ is any universal sheaf. 
\end{lemma}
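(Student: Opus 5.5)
The plan is to start from \eqref{equ on Md vir class}, $[M_d]^{\vir}=-e(V_d)\cap[M_d]$. With this, both formulas reduce to identifying the cohomology classes $\tau_0(\pi^*H^2)$ and $\tau_1(\pi^*H)$ on $M_d\cong M_d(\PP^2)$. I would do this with Grothendieck--Riemann--Roch for the zero section $\iota:\PP^2\hookrightarrow X$.

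\emph{Step 1: universal sheaves and GRR.} Since stable sheaves are scheme-theoretically supported on the zero section, a universal sheaf on $M_d\times X$ is $(\mathrm{id}\times\iota)_*\mathbb{F}$, where $\mathbb{F}$ is a universal sheaf on $M_d\times\PP^2$. The normal bundle is $N=\OO(-1)\oplus\OO(-2)$. Using $H^3=0$, GRR gives $\ch(\iota_*\mathbb{F})=\iota_*\bigl(\ch(\mathbb{F})\,\mathrm{td}(N)^{-1}\bigr)$. We have
$$\mathrm{td}(N)=(1-\tfrac12H+\tfrac1{12}H^2)(1-H+\tfrac13H^2)=1-\tfrac32H+\tfrac{11}{12}H^2,$$
so $\mathrm{td}(N)^{-1}=1+\tfrac32H+\tfrac43H^2$. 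Because $\ch_0(\mathbb{F})=0$, this yields
$$\ch_3(\iota_*\mathbb{F})=\iota_*\ch_1(\mathbb{F}),\qquad \ch_4(\iota_*\mathbb{F})=\iota_*\bigl(\ch_2(\mathbb{F})+\tfrac32H\ch_1(\mathbb{F})\bigr).$$
Combining this with the projection formula $\pi^*\alpha\cdot\iota_*x=\iota_*(\iota^*\pi^*\alpha\cdot x)=\iota_*(\alpha x)$ and the identity $\pi_M\circ(\mathrm{id}\times\iota)=\pi_M$, every pushforward from $M_d\times X$ becomes a pushforward from $M_d\times\PP^2$. Also $\mathbb{R}\pi_{M*}$ of $\iota_*\mathbb{F}$ equals $\mathbb{R}\pi_{M*}\mathbb{F}$, so the normalizing line bundle is $L:=\det\mathbb{R}\pi_{M*}\mathbb{F}$.

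\emph{Step 2: normalization.} Since $\ch_0(\mathbb{F})=0$, twisting by $L^{-1}$ changes the low-degree Chern characters as follows:
$$\ch_1(\mathbb{F}\otimes L^{-1})=\ch_1(\mathbb{F}),\qquad \ch_2(\mathbb{F}\otimes L^{-1})=\ch_2(\mathbb{F})-c_1(L)\ch_1(\mathbb{F}).$$
Hence $\tau_0$ is independent of the normalization. For $\tau_1$, the correction term is $-c_1(L)\,\pi_{M*}(H\ch_1(\mathbb{F}))$. Here $\pi_{M*}(H\ch_1(\mathbb{F}))=H\cdot dH$ integrated over a fiber, which equals $d$. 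Together with Step 1 this gives exactly \eqref{equ on tau1}.

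\emph{Step 3: the primary insertion.} We have $\tau_0(\pi^*H^2)=\pi_{M*}(H^2\ch_1(\mathbb{F}))$. The class $\ch_1(\mathbb{F})$ is the class of the universal support divisor, which is pulled back along $\sigma\times\mathrm{id}$ from the universal curve in $|\OO(d)|\times\PP^2$. That universal curve has class $h+dH$. Therefore
$$H^2(h+dH)=H^2h,$$
since $H^3=0$, and pushing forward gives $\tau_0(\pi^*H^2)=h$. Integrating both classes against $[M_d]^{\vir}=-e(V_d)\cap[M_d]$ then proves the lemma.

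The only delicate points are the Todd-class bookkeeping and the sign/normalization conventions. The main thing to double-check is that the support of $\mathbb{F}$ has first Chern class $\sigma^*h+dH$ on the nose, not just up to torsion. This holds because $M_d$ is smooth and $\sigma$ is defined via the Fitting support.
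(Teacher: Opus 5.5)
Your proposal is correct, and it is exactly the ``straightforward calculation'' the paper omits. GRR for the zero section with $\mathrm{td}(N)^{-1}=1+\frac32H+\frac43H^2$ gives $\ch_3=\iota_*\ch_1(\mathbb{F})$ and $\ch_4=\iota_*\bigl(\ch_2(\mathbb{F})+\frac32H\ch_1(\mathbb{F})\bigr)$; the normalization contributes $-c_1(L)\,\pi_{M*}(H\ch_1(\mathbb{F}))=-d\,c_1(L)$; and $\tau_0(\pi^*H^2)=h$ follows from $c_1(\mathbb{F})=\sigma^*h+dH$, consistent with the paper's remark that $\ch_3$ is dual to the fundamental class of the support, so combining with $[M_d]^{\vir}=-e(V_d)\cap[M_d]$ yields both formulas.
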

\begin{proof}
This is a straightforward calculation and we omit the details. 
\end{proof}
\begin{theorem}\label{thm on g=0}
Notations as above, we have 
$$\langle\tau_0(\pi^*H^2) \rangle_{d}^{\DT_4}=\frac{1}{2d^2}\sum_{k\geqslant1, k\mid d}
\mu(k)(-1)^{d/k}\binom{2d/k}{d/k},$$
where $\mu$ is the M\"obius function that satisfies $\sum_{k|d}\mu(k)=\delta_{d,1}$. 

In particular, Conjecture \ref{main conj}\,(1) holds for local $\PP^2$ \eqref{eq:X} with orientation specified in Conjecture \ref{main conj}\,(3) and \eqref{equ on Md vir class}.
\end{theorem}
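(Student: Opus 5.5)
We will compute $-\int_{M_d}h\cdot e(V_d)$ by torus localization, following the strategy in the outline. Let $T=(\CC^*)^3$ act on $\PP^2$ with coordinates $x_0,x_1,x_2$, and linearize $\OO(-1)$ as $\OO(-L_0)$. The action lifts to $M_d=M_d(\PP^2)$, to $V_d$, and to $h$. Since $M_d$ is smooth and proper, Atiyah--Bott gives
$$-\int_{M_d}h\,e(V_d)=-\sum_{Z\subset M_d^T}\int_Z\frac{h|_Z\, e(V_d|_Z)}{e(N_Z)}.$$
The rank of $V_d$ equals $\dim M_d=d^2+1$, so the integrand has total degree $d^2+2$. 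On each fixed component $Z$ we split $V_d|_Z=V_d^{\mathrm{fix}}\oplus V_d^{\mathrm{mov}}$.

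\textbf{Vanishing step.} First we show that most components contribute zero; this is Proposition \ref{prop:vanishing}. The tool is Lemma \ref{lem:boundary}, which relates $\rk V_d^{\mathrm{fix}}|_Z$ to $\dim Z$. It comes from Serre duality on $X$: $\Ext^1(F,F(-1))$ is dual to $\Ext^1(F,F(-2))$, and the CY4 weight bookkeeping compares this with $T_Z=\Ext^1(F,F)^{\mathrm{fix}}$. We expect $\rk V_d^{\mathrm{fix}}\geq \dim Z$, with an extra unit of room because of the insertion $h$ of degree one. The contribution of $Z$ is then a degree-$(\dim Z)$ class built from $h$, $e(V^{\mathrm{fix}})$ and moving factors. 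It vanishes whenever $e(V^{\mathrm{fix}})$ has degree exceeding $\dim Z$, or whenever $V^{\mathrm{fix}}$ contains a trivial summand. This leaves only the sheaves supported set-theoretically on $L_1$ or $L_2$. I expect this step to be the main obstacle. The difficult part is proving the rank inequality uniformly for non-isolated fixed loci, namely sheaves on the torus-invariant triangle with arbitrary multiplicities. My first approach is to compute $\chi_T(F,F(-1))$ via Čech/Klyachko data on the three affine charts and compare weight-zero parts.

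\textbf{Classifying the surviving loci.} Next, following Propositions \ref{prop:classification} and \ref{prop:bijection}, we show the surviving components are reduced isolated points. A $T$-fixed stable sheaf supported on a thickened line $dL_1$ is determined by a graded module over $k[x_1]/x_1^d$ on $\PP^1$. This amounts to line bundles $\OO(a_i)$ on the layers, glued by extension maps. Stability and $\chi=1$ then impose exactly the inequalities defining a set like $\cA_d$, and these data match orbits of break divisors on the 2-vertex graph with $d$ edges. At such a point $Z=\{F\}$ we have $h|_F$ equal to the weight of the fixed curve $\{x_1^d=0\}$. The remaining contribution $h\,e(V_d)/e(T_F)$ should reduce to a pure sign. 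The reason is that the moving weights of $V_d$ and $T_F$ cancel pairwise up to sign, again by the Serre-duality pairing, with exactly one weight left to cancel against $h$. By Proposition \ref{prop:localization}, the invariant is therefore a signed count of these points, with the $L_1$ and $L_2$ contributions related by symmetry.

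\textbf{Counting.} Finally, we apply the Konvalinka--Reineke--Tewari orbit count \cite[Thm.~3.7]{KRT}, as recorded in Theorem \ref{thm:KRT}. The number of orbits of break divisors is a Möbius-inverted expression $\frac{1}{d^2}\sum_{k\mid d}\mu(k)\binom{2d/k-1}{d/k}$, which equals $\frac{1}{2d^2}\sum\mu(k)\binom{2d/k}{d/k}$. Inserting the sign $(-1)^{d/k}$ gives the formula in the theorem. For the comparison with Conjecture \ref{main conj}\,(1), we invoke Klemm--Pandharipande's genus-zero computation for local $\PP^2$, namely $\mathrm{GW}_{0,d}(\pi^*H^2)=\frac{(-1)^d}{2d^2}\binom{2d}{d}$ up to convention. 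Möbius inversion of $\sum_{k\mid d}k^{-2}n_{0,d/k}$ then shows that $n_{0,d}$ coincides with our formula.
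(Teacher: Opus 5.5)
Your outline follows the paper's route (localization, vanishing, classification, break divisors, KRT, M\"obius inversion). However, the step you flag as the obstacle is the one that carries the proof, and your heuristic for it is off.

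\textbf{The vanishing step.} The torus weights on degree-$d$ monomials are distinct, so $\sigma$ sends a connected fixed component $B$ to an isolated fixed point of $|\OO(d)|$. Hence $h_T|_B$ is a pure weight and gives no ``extra room''. The vanishing criterion is just $\mathrm{obs}(F)=\rk(V_d|_B)^T>\dim B$. Note also that $\rk V_d=-\chi(F,F(-1))=d^2$, not $d^2+1$; otherwise $h\cdot e(V_d)$ would exceed $\dim M_d$ in degree and the integral would vanish.

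What is missing is the identity $1-\mathrm{def}(F)+\mathrm{obs}(F)=\tfrac12\bigl(\Norm{P_1(F)}+\Norm{P_2(F)}\bigr)$, where $P_i(F)=[\mathbb L\iota_{p_i}^*F]$. It is proved as follows.
\begin{enumerate}
\item The equivariant sequence $0\to\OO(-1)\xrightarrow{x_0}\OO\to\OO_{L_0}\to0$ gives $Q:=\chi_T(F,F)-\chi_T(F,F(-1))=\chi_T(G,G)$ for $G=\mathbb L\iota_{L_0}^*F$.
\item Localizing on $L_0$ at $p_1,p_2$ and using $(1-z)^{-1}+(1-z^{-1})^{-1}=1$ gives $Q+Q^\vee=P_1^\vee P_1+P_2^\vee P_2$.
\item Taking weight-zero parts, with $[T_F]=1-\chi_T(F,F)$ and $[V_F]=-\chi_T(F,F(-1))$, yields the identity. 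The $1$ comes from $\operatorname{Hom}(F,F)=\CC$, not from $h$.
\end{enumerate}
Since $P_i$ has rank zero, $P_i\neq0$ forces $\Norm{P_i}\geq2$, and every nonempty invariant curve meets $p_1$ or $p_2$. Your chart-by-chart computation could reproduce this (the $p_0$ term drops because $\OO(-L_0)$ has trivial weight there), but you must carry it out. The output is sharper than ``support on $L_1$ or $L_2$'': survivors must also satisfy $\Norm{P_1(F)}+\Norm{P_2(F)}=2$.

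\textbf{The classification.} That norm condition is exactly what your classification lacks. From $P_2(F)=(1-n)[q_*F|_{p_2}]$, norm two forces $[q_*F|_{p_2}]=\rho(1+n+\dots+n^{d-1})$. This gives rank-one eigenbundles $\OO(a_i)$ with consecutive weights and arrows nonvanishing at $p_2$, i.e.\ $\phi_i=x_2^{m_i}$.

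Set-theoretic support on $L_1$ plus stability does not give this. For $d=3$ and $a=(0,-1,-1)\in\cA_3$, the arrow $\phi_1\in H^0(L_1,\OO(1))$ can be $x_0$ or $x_2$. Both sheaves are $T$-fixed and stable with $\chi=1$, but the $x_0$ one has $\Norm{P_2}=4$ and contributes zero. So your count would be too large.

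\textbf{The combinatorics.} The relevant graph is the complete graph $K_d$ with $\mathfrak S_d$ acting, via $b_i=a_i+i$; monotonicity turns the stability inequalities into $\sum_{i\in S}b_i\geq g_{|S|}$. The $2$-vertex graph with $d$ edges has $\lceil d/2\rceil$ orbits, which is already wrong at $d=3$.

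\textbf{The signs.} The KRT count is $\frac1{d^2}\sum_{e\mid d}(-1)^{d+e}\mu(d/e)\binom{2e-1}{e}$; your unsigned version gives $\tfrac12$ at $d=2$. The sign $(-1)^{d/k}$ cannot be inserted by hand. It is this $(-1)^{d+e}$ combined with two further signs:
\begin{enumerate}
\item the pair contribution $(-1)^d\frac{s}{t-s}+(-1)^d\frac{t}{s-t}=(-1)^{d+1}$ of an $L_1$-point and its $L_2$-partner (with the lift $h_T(F)=-ds$, a single point is not a pure sign);
\item the orientation sign $-1$ in \eqref{equ on Md vir class}.
\end{enumerate}
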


\begin{theorem}\label{thm on g=1}
Notations as above, we have 
$$\langle\tau_1(\pi^*H) \rangle_{d}^{\DT_4}=(-1)^{d+1}\sum_{a\in \cA_d}\left(
2\sum_{r=1}^{d-1}\sum_{i=0}^{r-1}(a_i+1)-\frac12\sum_{i=0}^{d-1} (a_i+1)^2-(d-1)
\right),
$$
where $$\cA_d=\left\{a=(a_0,\ldots,a_{d-1})\in\ZZ^d \bigg|
\begin{array}{l}
\sum_{i=0}^{d-1} a_i=1-d,\quad a_{i+1}\geqslant a_i-1\ (0\leqslant  i<d-1),\\
\sum_{i=j}^{d-1}(a_i+1)\leqslant 0\quad(1\leqslant  j<d)
\end{array}\right\}.
$$
\end{theorem}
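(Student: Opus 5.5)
We prove Theorem \ref{thm on g=1} by the same torus-localization scheme outlined in the introduction, now with the descendent class \eqref{equ on tau1} in place of $h$. By the preceding lemma, $\langle\tau_1(\pi^*H)\rangle_d^{\DT_4}=-\int_{M_d}\tau_1(\pi^*H)\cdot e(V_d)$. We let the two-dimensional torus $T\subset(\CC^*)^3$ act on $\PP^2$, linearize $\OO(-1)=\OO(-L_0)$ so that $V_d$ is $T$-equivariant, and apply Atiyah--Bott localization on the smooth space $M_d$.

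\textbf{Step 1: vanishing on most fixed components.} On each fixed component $Z\subset M_d^T$ we have $e(V_d)|_Z=e(V_d^{\mathrm{fix}})\,e(V_d^{\mathrm{mov}})$, and the fixed part is a genuine bundle on $Z$. The CY4 identity relating $\rk V_d^{\mathrm{fix}}$ to $\dim Z$ (the analogue of the vanishing for $h\cdot e(V_d)$ in the primary case) forces $e(V_d^{\mathrm{fix}})$, even after multiplying by the degree-2 class $\tau_1$, to vanish unless $Z$ is one of the components whose sheaves are supported on $L_1$ or $L_2$. Since $\tau_1$ has cohomological degree 2, exactly like $h$, the dimension count goes through unchanged. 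We therefore reuse the vanishing proposition and the classification proposition verbatim: the surviving components are finitely many reduced isolated points.

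\textbf{Step 2: identify the surviving points with $\cA_d$.} A surviving point is a $T$-fixed stable sheaf set-theoretically on a coordinate line, say $L_1$, and thickened in the normal direction. Such a sheaf is described by a filtration with line-bundle subquotients $\OO_{L_1}(a_i)$ for $i=0,\dots,d-1$. The conditions defining $\cA_d$ should then be read off as follows: $\sum a_i=1-d$ is $\chi=1$; $a_{i+1}\ge a_i-1$ comes from the nonzero maps induced by the normal direction of weight $\OO(-1)$; and $\sum_{i\ge j}(a_i+1)\le 0$ is stability against the quotients. This is the bijection with break-divisor orbits, and the overall sign count from the localization proposition should give weight $(-1)^{d+1}$ per point. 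Here one must check that $-e(V_d^{\mathrm{mov}})/e(N^{\mathrm{vir}})$ is the same constant sign, which was already done for the primary invariant. Combining the two lines, together with the cancellation of equivariant weights, presumably accounts for the normalization.

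\textbf{Step 3: evaluate $\tau_1|_F$ and take the nonequivariant limit.} At a fixed point $F$ we compute $\tau_1|_F$ equivariantly from \eqref{equ on tau1} using the filtration. The contributions are as follows.
\begin{itemize}[leftmargin=*]
\item $\ch_2$ of a universal sheaf with subquotients $\OO_{L}(a_i)$ placed in thickening order $i$ involves $a_i+1$ and the positions $i$. Pairing with $H$ and summing the partial sums should produce the term $2\sum_r\sum_{i<r}(a_i+1)$.
\item The normalization term $-d\,c_1(\det\mathbb{R}\pi_{M*}\mathbb{F})$ together with the $\frac32 H^2\ch_1$ term gives, via Riemann--Roch on $L$ with $\chi(\OO_L(a))=a+1$, the quadratic term $-\frac12\sum(a_i+1)^2$ and the constant $-(d-1)$.
\end{itemize}
Since the total is a nonequivariant number, the equivariant weights must cancel after summing over the two lines (or once the localization weights are divided out). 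This lets us extract the integer displayed.

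The main obstacle is Step 3: tracking the equivariant weights in $\tau_1|_F$ and in $e(V_d^{\mathrm{mov}})/e(N)$ precisely enough that the weight-dependent pieces cancel and the displayed quadratic expression in $(a_i+1)$ emerges. I would first work this out explicitly for $d=1,2$, matching against known values, to pin down the conventions for the filtration order before doing the general computation.
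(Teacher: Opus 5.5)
Your Steps 1 and 2 correctly reuse Propositions~\ref{prop:vanishing} and~\ref{prop:classification}. The vanishing does not depend on the degree of the insertion at all: $c_{\mathrm{obs}(F)}\bigl((V_d|_B)^T\bigr)=0$ on $B$ simply because $\mathrm{obs}(F)>\dim B$.

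The genuine gap is Step 3, which is the entire new content of the theorem, and your description of it rests on a misconception. The localization weight at a surviving point is not a constant sign. By \eqref{eq:virtualdifference} and \eqref{eq:eulerratio}, $e_T(T_FM_d)/e_T(V_{d,F})=(-1)^d d(s-t)$. So a point $F$ on $L_1$ contributes $-\tau_1|_F/\bigl((-1)^d d(s-t)\bigr)$, including the orientation sign. In the primary case the constant $(-1)^{d+1}$ appears only after multiplying by $h_T|_F=-ds$ and adding the partner point on $L_2$.

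Moreover, $\tau_1|_F\in H^2_T(\mathrm{pt})$ at an isolated point has no nonequivariant part. Nonequivariant Riemann--Roch with $\chi(\OO_L(a))=a+1$ therefore cannot by itself produce the summand; all the information sits in the $s$- and $t$-coefficients of $\tau_1|_F$. Your attribution of the terms to $\ch_2$, $\frac32H^2\ch_1$ and the determinant is a guess, not a computation. That split is not even well defined: it changes with the overall character of the equivariant structure on $F$, and only the normalized total is invariant.

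What the proof actually requires is the following.
(i) An explicit equivariant $K$-theory class for each fixed $F$, e.g.\ as in \eqref{d:eq:resolution}, with characters $\nu_i=u^iv^{a_0-a_i-i}$. These are forced, up to an overall character, by \eqref{eq:string}.
(ii) The equivariant formula $c_1\bigl(R\Gamma_T(\PP^2,\OO(b)\otimes\CC_w)\bigr)=\frac{(b+1)(b+2)}{2}w+\frac{b(b+1)(b+2)}{6}(s+t)$, which gives $c_1(\det\mathbb{R}\pi_{M*}\mathbb{F})|_F$. Combined with the expansion \eqref{d:eq:unnormexpand} of the unnormalized part, this yields $\tau_1|_F=Us+Vt$ with $U=d\bigl(\frac d2-S_1(a)\bigr)$ and $V=d\bigl(\frac{3-d}{2}+S_1(a)+\frac{S_2(a)}{2}\bigr)$. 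Here $S_1(a)=\sum_i i(a_i+1)$ and $S_2(a)=\sum_i a_i(a_i+1)$.
(iii) The $s\leftrightarrow t$ symmetry. Under it the partner point on $L_2$ contributes $-(Ut+Vs)/\bigl((-1)^d d(t-s)\bigr)$, so each pair gives $(-1)^{d+1}(U-V)/d=(-1)^{d+1}\bigl(d-\frac32-2S_1(a)-\frac{S_2(a)}2\bigr)$. This becomes the displayed summand via $\sum_{r=1}^{d-1}\sum_{i<r}(a_i+1)=(d-1)-S_1(a)$ and $\sum_i(a_i+1)^2=S_2(a)+1$.

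None of (i)--(iii) appears in your proposal. As written, it is an outline that defers exactly the step that proves the formula.
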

The above theorem completely computes descendent $\DT_4$ invariants 
of local $\PP^2$. One can compare them with Gromov-Witten invariants computed in \cite[\S 3.2]{KP}. 
Although we have not found a proof of Conjecture \ref{main conj}\,(2) for all degree curve classes, the following gives a computational check for 
$d\leqslant 100$. 
\begin{corollary}\label{cor on d100}
Conjecture \ref{main conj}\,(2), (3) hold with orientation specified by \eqref{equ on Md vir class} for local $\PP^2$ 
when $d\leqslant 100$. 
\end{corollary}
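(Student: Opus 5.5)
The statement to prove is Corollary~\ref{cor on d100}: Conjecture~\ref{main conj}\,(2), (3) hold for local $\PP^2$ with the orientation \eqref{equ on Md vir class} when $d\leqslant 100$. The plan is a finite computer verification. The left-hand side is supplied by Theorem~\ref{thm on g=1}. The right-hand side is assembled from the genus zero and genus one GV type invariants of Klemm--Pandharipande together with the meeting invariants.

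\textbf{Step 1: the DT side.} Set $\alpha=\pi^*H$, so that $\alpha\cdot\beta=d$. Theorem~\ref{thm on g=1} expresses $\langle\tau_1(\pi^*H)\rangle_d^{\DT_4}$ as a signed sum over the finite set $\cA_d$. I would enumerate $\cA_d$ recursively, choosing $a_{d-1},a_{d-2},\dots$ in turn. The constraints $a_{i+1}\geqslant a_i-1$ and the tail inequalities $\sum_{i\geqslant j}(a_i+1)\leqslant 0$ bound each partial sum, which prunes the search. Since the summand is a quadratic statistic, a dynamic program that tracks the partial sums should be used instead of listing tuples. This matters because $|\cA_d|$ grows roughly like a Catalan-type or orbit count, so naive listing will not reach $d=100$.

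\textbf{Step 2: the GV side.} First, $n_{0,d}(\alpha^2)=n_{0,d}(\pi^*H^2)$ is given by Theorem~\ref{thm on g=0}. Next, the meeting invariants $m_{d_1,d_2}$ come from the rules (i)--(iv). On this noncompact $X$, I would interpret the Künneth term using the compactly supported pairing: $H^4$ is spanned by $\pi^*H^2$, and the relevant pairing pairs it with the point class. In practice I would follow the convention of \cite[\S 3.2]{KP}, where $m_{\beta_1,\beta_2}$ for local $\PP^2$ is computed from $n_{0,d}(\pi^*H)$-type data together with $n_{0,d}(c_2(X))$. Finally, $n_{1,d}$ is extracted from $\mathrm{GW}_{1,d}$ through \eqref{equ on GW1}. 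The $\mathrm{GW}_{1,d}$ values are obtained by the localization/recursion of \cite[\S 3.2]{KP}, namely the recursion \eqref{d:eq:genus1} quoted in the introduction. I would then invert the $\sigma(k)/k$ multiple-cover structure and subtract the $\log$ terms.

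\textbf{Step 3: comparison.} With all quantities computed in exact rational arithmetic, check that for each $d\leqslant 100$
$$\langle\tau_1(\pi^*H)\rangle_d^{\DT_4}=\frac{n_{0,d}(\pi^*H^2)}{2d}-\sum_{d_1+d_2=d}\frac{d_1d_2}{4d}m_{d_1,d_2}-\sum_{k\mid d}\frac{d}{k}n_{1,d/k}.$$
Part (3) then follows, because the orientation used in Theorem~\ref{thm on g=1} is exactly the one in \eqref{equ on Md vir class}.

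\textbf{Main obstacle.} I expect the hardest part to be normalizing the GV side correctly for a noncompact $X$. This means fixing the meaning of $g^{ij}$ and $n_{0,d}(c_2(X))$, and correctly implementing the genus one recursion of \cite{KP}. The way to settle these conventions is to match low degrees first: $d=1,2,3$ should agree with the tables in \cite{KP} and with earlier computations \cite{CMT1, Cao1}. Only after that would I run the check up to $d=100$.
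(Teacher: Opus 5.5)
Your plan matches the paper's proof: the DT side comes from Theorem~\ref{thm on g=1}, the GV side is built from $n_{0,d}(\pi^*H^2)$, the closed form $\mathrm{GW}_{1,d}=(-1)^d4^d/(24d)$ of \cite[\S 3.2]{KP} and the meeting invariants, and the two are compared by computer for $d\leqslant 100$. Note that \eqref{d:eq:genus1} is the inversion that produces $n_{1,d}$ from these inputs; it is not a recursion for $\mathrm{GW}_{1,d}$.

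The normalization you flag as the main obstacle needs no low-degree calibration. One has $c_2(X)=-4\pi^*H^2$, and the $(4,4)$-part of the diagonal is $2\,\pi^*H^2\otimes\pi^*H^2$ (not a pairing with a point class), where $2=\int_{\PP^2}e(\OO(-1)\oplus\OO(-2))$ is the self-intersection of the zero section; the paper reads this off the compactification in \cite[Prop.~2.11]{CT2}, and it gives exactly the recursion \eqref{equ on meet inv}.
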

\begin{proof}
To compute meeting invariants $m_{i,j}$, we use the compactification
and intersection matrix in \cite[Prop.~2.11]{CT2}.
Then the recursion relations \eqref{meeting inv recursion 1}, \eqref{meeting inv recursion 2} become
\begin{align}\label{equ on meet inv}
m_{i,j}&=0\,, \quad \text{if }i\leqslant 0\text{ or }j\leqslant 0,  \\   \nonumber
m_{i,j}&=2\,n_{0,i}(\pi^*H^2)\,n_{0,j}(\pi^*H^2)+m_{i,j-i}+m_{i-j,j}\,, \quad \text{if }i,j>0,\ i\ne j,\label{d:eq:meeting}\\ \nonumber 
m_{i,i}&=-4\,n_{0,i}(\pi^*H^2)+2\,(n_{0,i}(\pi^*H^2))^2-\sum_{r=1}^{i-1}m_{r,i-r}.  \nonumber
\end{align}
By \cite[\S 3.2]{KP}, we have 
\begin{equation}\label{equ on n0dGW1}
n_{0,d}(\pi^*H^2)=\frac{1}{2d^2}\sum_{k\geqslant1, k\mid d}
\mu(k)(-1)^{d/k}\binom{2d/k}{d/k}, \quad \mathrm{GW}_{1, d}=\frac{(-1)^d4^d}{24d}.  
\end{equation}
Denote $\sigma(k)=\sum_{r\mid k}r$, \eqref{equ on GW1} in this case becomes 
\begin{equation}
\sum_{d\geqslant 1}\mathrm{GW}_{1, d}q^d
=\sum_{d\geqslant 1}n_{1,d}\sum_{k\geqslant 1}\frac{\sigma(k)}k q^{kd}
 -\frac1{24}\sum_{d\geqslant1}(4n_{0,d}(\pi^*H^2)+M_d)\log(1-q^d), \nonumber
\end{equation}
where 
\begin{equation}\label{equ on Md}
M_d=\sum_{i=1}^{d-1}m_{i,d-i}. \end{equation}
By reversing the formula, we get the recursion formula of genus one GV invariants:
\begin{equation}
n_{1,d}=\frac{(-1)^d4^d}{24d}
-\sum_{\substack{k\mid d\\ k>1}}\frac{\sigma(k)}{k}n_{1,d/k}
-\sum_{\substack{k\mid d\\ k\geqslant 1}}\left(\frac{n_{0,d/k}(\pi^*H^2)}{6k}
+\frac{M_{d/k}}{24k}\right),
\label{d:eq:genus1}
\end{equation}
where $n_{0,d}(\pi^*H^2)$ is given by \eqref{equ on n0dGW1}, $M_d$ is the sum of meeting invariants \eqref{equ on Md}, and the meeting invariants are recursively determined by 
\eqref{equ on meet inv}.
One then uses a computer program to check the claim for $d\leqslant 100$.
\end{proof}

\section{Proof of Theorem \ref{thm on g=0}}

\subsection{A vanishing result at torus fixed components}
\label{sec:vanishing}

Let $T=(\CC^*)^2$ act on $\PP^2$ by
\begin{equation}\label{equ on torus action}
(u,v)\cdot[x_0:x_1:x_2]=[x_0:u^{-1}x_1:v^{-1}x_2].
\end{equation}
Let $L_i=\{x_i=0\}\cong \PP^1$, and $p_0,p_1,p_2$ be the $T$-fixed points. 
We linearize $\OO(-1)$
so that
\begin{equation}
0\rightarrow\OO(-1)\xrightarrow{x_0}\OO
\rightarrow\OO_{L_0}\rightarrow0
\label{eq:line-sequence}
\end{equation}
is equivariant. 
Torus localization on the smooth projective variety $M_d=M_d(X)\cong M_d(\PP^2)$ gives
\begin{equation}
\int_{M_d}h\cdot e(V_d)=\sum_{B\subset M_d^T}
\int_B\frac{h_T|_B\,e_T(V_d|_B)}{e_T(N_{B/M_d})},
\label{eq:localization}
\end{equation}
where the sum is over torus fixed connected components, 
$h_T$ is the natural lift of the hyperplane class $h$.
The equivariant
pushforward lies in $H_T^0(\mathrm{pt})=\QQ$ and recovers the ordinary
integral after forgetting equivariant parameters. In particular, its value does
not depend on the chosen lifts. 

A stable torus-fixed sheaf $[F]\in M_d^T$ admits an equivariant structure, e.g.~\cite{Ko}, unique up to an overall character.
We define the $K$-theory class of the derived pullback of $F$ to a point $\iota_{p_i}:p_i\to \PP^2$: 
\[
P_i(F):=[\mathbb{L}\iota_{p_i}^*F]\in\ZZ[u^{\pm1},v^{\pm1}],\quad i=1,2.
\]
For a $K$-theory class $E=\sum_i c_{i}\alpha_i$, where the characters $\alpha_i$ are
distinct, we define a `norm':
$$\Norm E=\sum_i c_i^2. $$
If $[F]\in B\subset M_d^T$, write
\[
\mathrm{def}(F):=\dim B=\dim\Ext^1(F,F)^T, \quad \mathrm{obs}(F):=\rk(V_d|_B)^T.
\]

\begin{lemma}
\label{lem:boundary}
We have
\begin{equation}
1-\mathrm{def}(F)+\mathrm{obs}(F)=\frac{\Norm{P_1(F)}+\Norm{P_2(F)}}{2}.
\label{eq:norm}
\end{equation}
\end{lemma}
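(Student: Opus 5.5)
The plan is to rewrite the left-hand side of \eqref{eq:norm} as the $T$-invariant part of one equivariant Euler characteristic. That Euler characteristic localizes onto the line $L_0$, and the two torus fixed points of $L_0$ are exactly $p_1,p_2$; the invariant part can then be read off from $P_1(F),P_2(F)$.

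\textbf{Step 1: vanishing of the outer Ext groups.} Fix the equivariant structure on $F$; it is unique up to an overall character, which cancels in every $\Ext(F,F\otimes -)$. By stability, $\Hom(F,F)=\CC$ with trivial weight, and $\Hom(F,F(-k))=0$ for $k>0$. Serre duality on $\PP^2$ gives
\[
\Ext^2(F,F)\cong \Hom(F,F(-3))^\vee=0,\qquad \Ext^2(F,F(-1))\cong\Hom(F,F(-2))^\vee=0 .
\]
Also $\Ext^0(F,F(-1))=0$, and all the $\Ext^i$ vanish for $i\geq 3$. Taking $T$-invariant parts, we get
\[
\mathrm{def}(F)=\dim\Ext^1(F,F)^T=1-\chi(F,F)^T,\qquad \mathrm{obs}(F)=\dim\Ext^1(F,F(-1))^T=-\chi(F,F(-1))^T .
\]
Here $\chi(-,-)$ denotes the equivariant Euler characteristic in $\ZZ[u^{\pm1},v^{\pm1}]$, and $\OO(-1)=\OO(-L_0)$ carries the linearization of \eqref{eq:line-sequence}, matching the definition of $V_d$. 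Combining these with \eqref{eq:line-sequence} tensored by $F$, the left-hand side of \eqref{eq:norm} becomes
\[
1-\mathrm{def}(F)+\mathrm{obs}(F)=\chi(F,F)^T-\chi(F,F(-L_0))^T=\chi\bigl(F,F\otimes^{\mathbb L}\OO_{L_0}\bigr)^T=\chi_{L_0}\bigl(G,G\bigr)^T,
\]
where $G=\mathbb L\iota_{L_0}^*F\in K_T(L_0)$. The last equality is adjunction for $\iota_{L_0}:L_0\hookrightarrow\PP^2$; the Koszul twist by the normal bundle appears in the dual slot, so the sign convention of the linearization has to be tracked there.

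\textbf{Step 2: reduce to line bundles on $L_0$.} Since $K_T(\PP^1)$ is generated by equivariant line bundles, write $G=\sum_j m_j\,\chi_j\otimes\OO(k_j)$ with $m_j\in\ZZ$ and $\chi_j$ characters. Then
\[
\chi_{L_0}(G,G)^T=\sum_{j,l}m_jm_l\,\bigl[\chi(L_{jl})\bigr]^T,\qquad L_{jl}:=\chi_j^{-1}\chi_l\otimes\OO(k_l-k_j).
\]
Restricting to the fixed points, $P_i(F)=G|_{p_i}=\sum_j m_j\,\chi_j\,\OO(k_j)|_{p_i}$ for $i=1,2$, and hence
\[
\Norm{P_i(F)}=\sum_{j,l}m_jm_l\,\delta\bigl(L_{jl}|_{p_i}\ \text{trivial}\bigr).
\]
This holds with no assumption that the $\chi_j\OO(k_j)|_{p_i}$ are distinct: the norm is the symmetric bilinear form "coincidence of characters", so grouping equal characters does not change the sum.

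\textbf{Step 3: the key identity on $\PP^1$.} Since the coefficient $m_jm_l$ is symmetric in $(j,l)$ and $L_{lj}=L_{jl}^\vee$, it suffices to prove, for every $T$-equivariant line bundle $L$ on $L_0\cong\PP^1$,
\[
[\chi(L)]^T+[\chi(L^\vee)]^T=\delta(L|_{p_1}\text{ trivial})+\delta(L|_{p_2}\text{ trivial}).
\]
Summing this over all $(j,l)$ and halving then gives exactly $\tfrac12\bigl(\Norm{P_1(F)}+\Norm{P_2(F)}\bigr)$. Suppose $\deg L=n\geq 0$. The weights of $H^0(L)$ are the lattice points of the segment from the weight of $L|_{p_1}$ to that of $L|_{p_2}$. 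By Serre duality, $H^1(L^\vee)\cong H^0(L\otimes K)^\vee$, and its weights are the open interior of the same segment; here $K=\OO(-2)$ linearized with weights the tangent weights at $p_1,p_2$. If $n=0$, both endpoints give the same character and $H^0(L^\vee)$ contributes the second copy, so the identity still holds; the case $n<0$ follows by swapping $L$ and $L^\vee$. The difference "closed segment minus open segment" counts the endpoints, which proves the identity.

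\textbf{Main obstacle.} I expect the hardest part to be the bookkeeping of linearizations in Step 1. One must check that the $\OO(-1)$ appearing in $V_d$ is really $\OO(-L_0)$ with the linearization of \eqref{eq:line-sequence}; otherwise the restriction to $L_0$ picks up a nontrivial character and Step 3 fails. One must also check that the adjunction produces $\chi_{L_0}(G,G)$ rather than a twist by a nontrivial normal character. I would first verify Step 1 directly on the simplest example $F=\OO_{L_1}$ with $d=1$: there $\mathrm{def}=\mathrm{obs}=0$, $P_1(F)=0$ and $\Norm{P_2(F)}=2$, so both sides of \eqref{eq:norm} equal $1$. This pins down the conventions before running the general argument.
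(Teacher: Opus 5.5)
Your proof is correct. Its first half coincides with the paper's. Step~1 is exactly the reduction $\chi_T(F,F)-\chi_T(F,F(-1))=\chi_T(G,G)$ via \eqref{eq:line-sequence} and the projection formula, combined with $[T_FM_d]=1-\chi_T(F,F)$ and $[V_d|_F]=-\chi_T(F,F(-1))$. You actually justify these last two identities (stability kills $\operatorname{Hom}(F,F(-k))$ for $k>0$, Serre duality kills both $\Ext^2$'s), whereas the paper only quotes them.

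The routes diverge in evaluating $\chi_{L_0}(G,G)^T$. The paper uses $K$-theoretic localization on $L_0$ to write $Q=\frac{P_1^\vee P_1}{1-v/u}+\frac{P_2^\vee P_2}{1-u/v}$. It then symmetrizes by dualizing the representation, obtaining $Q+Q^\vee=P_1^\vee P_1+P_2^\vee P_2$ from $(1-z)^{-1}+(1-z^{-1})^{-1}=1$, and takes invariant parts using $(Q^\vee)^T=Q^T$. You instead expand $G$ in equivariant line bundles and symmetrize by swapping the two arguments ($L_{lj}=L_{jl}^\vee$). You then verify $[\chi(L)]^T+[\chi(L^\vee)]^T=\delta(L|_{p_1}\text{ trivial})+\delta(L|_{p_2}\text{ trivial})$ directly from the characters of $H^0(L)$ and $H^1(L^\vee)$. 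In other words, you prove the line-bundle case of the paper's identity by hand and extend by bilinearity. The paper's argument is shorter and needs no basis of $K_T(\PP^1)$. Yours avoids localization and rational functions entirely, so the paper's remark that the identity holds ``after cancellation'' as one of finite Laurent polynomials is automatic in your setting.

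Two small corrections, neither affecting the argument. First, the fibres of $K=\OO(-2)$ at $p_1,p_2$ are the cotangent lines, so it carries the cotangent characters (the paper's $v/u$, $u/v$), not the tangent ones. Also, the characters of $H^1(L^\vee)\cong H^0(L\otimes K)^\vee$ are the \emph{inverses} of the interior characters of the segment; this is harmless since you only count trivial ones. Second, the worry in your last paragraph is unfounded. The adjunction you use, $\mathbb L\iota^*\dashv\iota_*$, gives $\chi(F,\iota_*G)=\chi_{L_0}(\mathbb L\iota^*F,G)$ with no normal-bundle twist; that twist only enters for $\iota_*\dashv\iota^!$. Moreover, the linearization of $\OO(-1)$ in $V_d$ is, by the paper's choice, the one making \eqref{eq:line-sequence} equivariant, so there is nothing to check there.
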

\begin{proof}
Set $G=\mathbb{L}\iota_{L_0}^*F$, the sequence
\eqref{eq:line-sequence} and  projection formula imply
\begin{equation}\label{QchiFFGG}
Q:=\chi_T(F,F)-\chi_T(F,F(-1))=\chi_T(G,G).
\end{equation}
The cotangent characters of $L_0$ at $p_1,p_2$ are $v/u$ and $u/v$.
Localization on $L_0\cong\PP^1$ yields
\[
Q=\frac{P_1(F)^\vee P_1(F)}{1-v/u}+\frac{P_2(F)^\vee P_2(F)}{1-u/v}.
\]
Since numerators are self-dual, and 
$(1-z)^{-1}+(1-z^{-1})^{-1}=1$, we obtain
\[
Q+Q^\vee=P_1(F)^\vee P_1(F)+P_2(F)^\vee P_2(F).
\]
After cancellation this is an identity of finite Laurent polynomials.
Taking trivial character terms and using the equivariant $K$-theory expression of the tangent and obstruction spaces:
\begin{equation}
[T_FM_d]=1-\chi_T(F,F),\quad
[V_{d}|_{F}]=-\chi_T(F,F(-1)),
\label{eq:ext}
\end{equation}
we get 
\begin{equation*}2(1-[T_FM_d]+[V_{d}|_{F}])^{T\emph{-}\mathrm{fixed}}= 2(\chi_T(G,G))^{T\emph{-}\mathrm{fixed}}=\Norm{P_1(F)}+\Norm{P_2(F)}. \qedhere \end{equation*}
\end{proof}

\begin{lemma}
\label{lem:support}
For $i=1,2$, we have $P_i(F)=0$ if and only if $p_i\notin\supp(F)$.
If $P_i(F)\ne0$, then $\Norm{P_i(F)}\geqslant 2$.
\end{lemma}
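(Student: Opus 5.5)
The plan is to work locally at the fixed point $p_i$ ($i=1,2$) in its $T$-invariant affine chart $U_i\cong\CC^2$. There the equivariant sheaf $F|_{U_i}$ is a finitely generated $T$-graded module $M_i$ over $\CC[x,y]$, where $x,y$ are the coordinates of $U_i$. The torus acts on $x,y$ by two characters that are linearly independent in the character lattice of $T$. Hence every weight space of $M_i$ is finite dimensional, and $M_i$ has a well-defined Hilbert series $\mathrm{HS}(M_i)\in\ZZ[[\,\cdot\,]]$, expanded in the cone spanned by the weights of the coordinate functions.

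Since $F$ is a pure one dimensional sheaf on the smooth surface $\PP^2$, it has depth one at every point of its support. By Auslander--Buchsbaum it therefore has projective dimension one, and we can take a minimal $T$-equivariant free resolution
\[
0\to \textstyle\bigoplus_{j}\OO_{U_i}\otimes\chi_j\to \bigoplus_k \OO_{U_i}\otimes\psi_k\to F|_{U_i}\to 0 .
\]
Both free modules have the same rank $n$, because $F$ has rank zero. Restricting this resolution to $p_i$ computes the derived pullback, so
\[
P_i(F)=\sum_k\psi_k-\sum_j\chi_j .
\]

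For the first claim: if $p_i\notin\supp(F)$, then $F|_{U'}=0$ on a neighbourhood $U'$ of $p_i$, so $\mathbb{L}\iota_{p_i}^*F=0$ and $P_i(F)=0$. Conversely, the resolution gives the standard identity
\[
\mathrm{HS}(M_i)=\frac{P_i(F)}{(1-t_x)(1-t_y)},
\]
where $t_x,t_y$ are the characters of the coordinate functions. If $p_i\in\supp(F)$, then $M_i\neq 0$. Since $\mathrm{HS}(M_i)$ has nonnegative coefficients, it is nonzero, and hence $P_i(F)\neq 0$.

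For the bound on the norm, I will evaluate $P_i(F)$ at $u=v=1$, i.e.\ forget the equivariant structure. This gives $\rk\bigl(\bigoplus_k\OO\psi_k\bigr)-\rk\bigl(\bigoplus_j\OO\chi_j\bigr)=\rk F=0$, so the coefficients $c_\alpha$ of $P_i(F)=\sum_\alpha c_\alpha\alpha$ are integers summing to zero. A nonzero integer vector with zero sum has at least two nonzero entries, each with $c_\alpha^2\geqslant1$, so $\Norm{P_i(F)}\geqslant 2$.

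The only delicate step is justifying that the Hilbert series is well defined, so that its nonvanishing forces $P_i(F)\neq0$. This needs the weights of $x,y$ at $p_i$ to be independent characters of $T$. From \eqref{equ on torus action}, at $p_1$ they are, up to sign, $u/v$ and $u$ (or their inverses), and symmetrically at $p_2$; these are independent, so the argument applies at both points.
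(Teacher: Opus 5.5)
Your proof is correct and follows the paper's route. The norm bound is exactly the paper's argument: rank zero forces $P_i(F)|_{u=v=1}=0$, and a nonzero integer vector with zero coefficient sum has at least two nonzero entries. For the support statement the paper only says it ``can be easily checked locally''; your Hilbert series identity $\mathrm{HS}(M_i)=P_i(F)/((1-t_x)(1-t_y))$ is a valid way to supply that check, and it is well defined because the coordinate weights at $p_i$ are independent characters.
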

\begin{proof}
The first assertion can be easily checked locally. 
Since $F$ has rank zero, $P_i(F)|_{u=v=1}=0$. A nonzero integral
Laurent polynomial whose coefficients sum to zero has squared
coefficient norm at least two.
\end{proof}

\begin{proposition}
\label{prop:vanishing}
A connected component $B\subset M_d^T$ contributes zero to
\eqref{eq:localization} unless sheaves in it satisfy
\begin{equation}
\supp(F)=L_1\text{ or }L_2,\,\,\, \mathrm{and}\,\,\, 
\Norm{P_1(F)}+\Norm{P_2(F)}=2.
\label{eq:survivors}
\end{equation}
Here support is understood set-theoretically.
\end{proposition}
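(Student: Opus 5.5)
The plan is to bound each fixed-locus contribution in \eqref{eq:localization} by a degree count on $B$, and then convert the resulting inequality into a support condition using Lemma \ref{lem:boundary} and Lemma \ref{lem:support}.

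\emph{Step 1: a degree count.} Fix a connected component $B\subset M_d^T$. Since $T$ acts trivially on $B$, the restriction $V_d|_B$ splits equivariantly as
$$V_d|_B=V_d|_B^{\mathrm{fix}}\oplus V_d|_B^{\mathrm{mov}}.$$
Here $V_d|_B^{\mathrm{fix}}$ is the weight-zero subbundle, of constant rank $\mathrm{obs}(F)$ along the connected $B$. Its equivariant Euler class therefore equals its ordinary Euler class $e(V_d|_B^{\mathrm{fix}})\in H^{2\,\mathrm{obs}(F)}(B)$. We get
$$e_T(V_d|_B)=e(V_d|_B^{\mathrm{fix}})\cdot e_T(V_d|_B^{\mathrm{mov}}),$$
and the second factor is invertible after localization. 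If $\mathrm{obs}(F)>\mathrm{def}(F)=\dim B$, then $e(V_d|_B^{\mathrm{fix}})=0$ in $H^*(B)$ for degree reasons. The whole integrand $h_T|_B\,e_T(V_d|_B)/e_T(N_{B/M_d})$ is then zero, and $B$ contributes nothing. So a contributing component must satisfy $\mathrm{obs}(F)\leqslant \mathrm{def}(F)$.

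\emph{Step 2: the norm bound.} By Lemma \ref{lem:boundary}, the inequality $\mathrm{obs}(F)\leqslant \mathrm{def}(F)$ is equivalent to
$$\Norm{P_1(F)}+\Norm{P_2(F)}=2\bigl(1-\mathrm{def}(F)+\mathrm{obs}(F)\bigr)\leqslant 2.$$
Lemma \ref{lem:support} says each nonzero $P_i(F)$ has norm at least $2$. Hence at most one of $p_1,p_2$ lies in $\supp(F)$.

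We need at least one of them. The support of $F$ is a $T$-invariant curve of degree $d\geqslant1$, so it meets $L_0$. Since the support is $T$-invariant, the intersection with $L_0$ is $T$-invariant and finite, hence consists of fixed points of $L_0$, namely $p_1$ or $p_2$. So exactly one of $P_1(F),P_2(F)$ is nonzero, and the norm sum equals $2$. This is the second condition in \eqref{eq:survivors}.

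\emph{Step 3: identifying the support.} The torus $T$ acts with generic weights, so the only $T$-invariant irreducible curves in $\PP^2$ are the coordinate lines. Hence $\supp(F)$ is a union of some of $L_0,L_1,L_2$. We have:
\begin{itemize}
\item $L_0$ contains both $p_1$ and $p_2$, so it is excluded by Step 2;
\item $L_1$ contains $p_2$ (and not $p_1$);
\item $L_2$ contains $p_1$ (and not $p_2$).
\end{itemize}
So $L_1$ and $L_2$ cannot both occur, and set-theoretically $\supp(F)=L_1$ or $\supp(F)=L_2$.

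\emph{Main obstacle.} The delicate point is Step 1: the argument needs the fixed part of $V_d|_B$ to be a genuine vector bundle of constant rank $\mathrm{obs}(F)$ on $B$, whose equivariant Euler class is its non-equivariant one. This follows because $V_d$ is a vector bundle on $M_d$ and $B$ is connected with trivial $T$-action, so the weight decomposition is locally constant. It is also where the Calabi--Yau input enters, since Lemma \ref{lem:boundary} is what turns the dimension inequality into the norm inequality.
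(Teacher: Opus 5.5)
Your proposal is correct and follows the paper's argument essentially step for step. The fixed part of $V_d|_B$ has Euler class in degree $2\,\mathrm{obs}(F)>2\dim B$, so it vanishes; Lemmata~\ref{lem:boundary} and~\ref{lem:support} then force exactly one nonzero $P_i(F)$, of squared norm two; and the classification of $T$-invariant curves as unions of coordinate lines pins down the support. One point is left implicit: $\supp(F)\cap L_0$ is finite because your ``at most one'' conclusion already rules out $L_0\subseteq\supp(F)$ (alternatively, as the paper argues, every coordinate line contains $p_1$ or $p_2$).
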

\begin{proof}
If both $P_1(F),P_2(F)$ are nonzero, Lemmata~\ref{lem:boundary}
and~\ref{lem:support} give $\mathrm{obs}(F)>\mathrm{def}(F)$. The same inequality holds if one
$P_i(F)$ is zero and the other has squared norm greater than two.
Decompose $V_d|_B$ into its fixed and movable subbundles. Its equivariant
Euler class contains the factor
\[
c_{\mathrm{obs}(F)}\bigl((V_d|_B)^T\bigr)=0,
\]
which vanishes since $\mathrm{obs}(F)>\dim B$. Hence the contribution of the whole component vanishes.

Every invariant curve in $\PP^2$ is supported on coordinate lines.
An invariant curve avoiding $p_1$ is supported on $L_1$, and one
avoiding $p_2$ is supported on $L_2$. A nonempty such curve cannot avoid
both points. Lemma~\ref{lem:support} then gives \eqref{eq:survivors}.
\end{proof}

\subsection{The remaining fixed sheaves}
We consider sheaves $F$ satisfying \eqref{eq:survivors} and supported
on $L_1$ (those that supported on $L_2$ can be treated similarly). Projection away from $p_1$ to $L_1$ gives an isomorphism
\[
U:=\PP^2\setminus\{p_1\}\cong\Tot_{L_1}\OO(1),\quad
q:U\longrightarrow L_1.
\]
Some power of the ideal of $L_1$ annihilates $F$, so $F$ is finite over $L_1$. 
Purity implies that 
$$E:=q_*F$$ is a vector bundle. 
At $p_2$ the normal coordinate $y=x_1/x_2$ has character $n:=u/v$.
Then 
\begin{equation}
P_2(F)=(1-n)[E|_{p_2}].
\label{eq:PW}
\end{equation}

\begin{lemma}
\label{lem:string}
If $\Norm{P_2(F)}=2$, there is a character $\rho$ such that
\begin{equation}
[E|_{p_2}]=\rho(1+n+\cdots+n^{d-1}),\quad P_2(F)=\rho(1-n^d).
\label{eq:string}
\end{equation}
\end{lemma}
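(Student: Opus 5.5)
We will work entirely with the Laurent polynomial $W:=[E|_{p_2}]\in\ZZ[u^{\pm1},v^{\pm1}]$, which has nonnegative coefficients, and combine \eqref{eq:PW} with the norm condition. First we pin down the rank of $E$. Since $F$ is supported on $L_1$ with $[F]=d[L_1]$ and $q$ is finite on the support, $E=q_*F$ is a vector bundle of rank $d$ on $L_1$. This is because its rank equals the generic length of $F$ along the fibres of $q$, which is the multiplicity $d$. Hence $W|_{u=v=1}=d$, so $W$ has nonnegative integer coefficients summing to $d$.

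Next we exploit the factor $(1-n)$. We decompose $W$ according to cosets of the subgroup of characters generated by $n=u/v$, writing $W=\sum_{\rho}\rho\, f_\rho(n)$. Here $\rho$ runs over coset representatives, and each $f_\rho$ is a polynomial in $n$ with nonnegative coefficients, normalized so that $f_\rho(0)\neq0$. Then $P_2(F)=\sum_\rho \rho(1-n)f_\rho(n)$, and distinct cosets contribute disjoint sets of characters, so
$$\Norm{P_2(F)}=\sum_\rho \Norm{(1-n)f_\rho(n)}.$$
Each nonzero summand is a nonzero Laurent polynomial whose coefficients sum to zero (it vanishes at $n=1$), so by the argument of Lemma \ref{lem:support} it has norm at least $2$. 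The hypothesis $\Norm{P_2(F)}=2$ therefore forces exactly one coset to appear, and $W=\rho f(n)$ with $\Norm{(1-n)f(n)}=2$.

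It remains to analyse $g(n):=(1-n)f(n)$, a polynomial with integer coefficients, $g(1)=0$ and squared norm $2$. It must have the form $\pm(n^a-n^b)$. The constant term of $g$ is $f(0)>0$, so $g=n^0-n^m=1-n^m$ for some $m\geqslant1$. Dividing by $(1-n)$ gives $f=1+n+\cdots+n^{m-1}$. Evaluating at $n=1$ gives $m=f(1)=W|_{u=v=1}=d$. This yields both identities in \eqref{eq:string}.

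The only step needing care is the first one, namely justifying that $E$ has rank exactly $d$, equivalently that the length of $F$ on a generic fibre of $q$ equals the multiplicity of the curve class. We expect this to follow from $\ch_1(F)=d[L_1]$ together with finiteness of $q$ on $\supp F$, by computing the rank generically on $L_1$. The remaining steps are elementary manipulations of Laurent polynomials.
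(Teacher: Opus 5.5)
Your proof is correct and follows essentially the same route as the paper. Both arguments decompose $[E|_{p_2}]$ over cosets of the subgroup generated by $n$ and use additivity of the norm to isolate a single coset. Both then show that $(1-n)f$ having squared norm $2$ forces $f$ to be a string of consecutive powers of $n$ with coefficient $1$, whose length is fixed by $\rk E=d$; the paper uses this rank fact without comment, and your justification via the fundamental cycle $d[L_1]$ is the standard one. You are more explicit than the paper in two places: you show that each nonzero coset contributes at least $2$, and you use $f(0)>0$ to fix the sign. These are the same ideas spelled out.
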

\begin{proof}
We may write $[E|_{p_2}]$ as a sum over cosets of the group generated by $n=u/v$: 
$$[E|_{p_2}]=\sum_{\rho}\rho \sum_{j\in \mathbb{Z}}w_jn^j. $$
As
$$(1-n)\sum_{j}w_jn^j=\sum_{j}(w_j-w_{j-1})n^j, $$
therefore
$$\Norm{P_2(F)}=\sum_{\rho}\sum_{j}(w_j-w_{j-1})^2. $$
By assumption, we know that there is only one nonzero $\rho$ and its
$w_j=1$ when $a\leqslant j\leqslant b$ for some $a,b$. 
By absorbing $n^a$ into $\rho$, and using the fact that $[E|_{p_2}]$ has length $d$, we get \eqref{eq:string}.
\end{proof}

The subtorus $\mathbb{C}^*_{u}\subset T$ fixes $L_1$ pointwise. 
By \eqref{eq:string}, the $\mathbb{C}^*_{u}$-eigenbundles
of $E$ have rank one, with consecutive distinct weights, thus
\begin{equation}
E=\bigoplus_{i=0}^{d-1}E_i=\bigoplus_{i=0}^{d-1}\OO(a_i),\quad
\phi_i:\OO(a_i)\rightarrow\OO(a_{i+1}+1).
\label{eq:chain}
\end{equation}
Every adjacent arrow is nonzero. Otherwise the module splits into two
nonzero summands, one of which has slope at least that of $F$, contrary
to the stability.

Near $p_2$, the normal action has the form $nE_i\to E_{i+1}$. Its source and target fibers have the same
character by \eqref{eq:string}. Since the other local coordinate $x_0/x_2$ has character
$v^{-1}$, independent of $n$, the nonzero equivariant coefficient is
constant there. Thus each arrow $\phi_i$ is nonvanishing at $p_2$ and is globally
a scalar multiple of
\begin{equation}
x_2^{m_i},\quad m_i=a_{i+1}-a_i+1\geqslant 0,
\label{eq:arrows}
\end{equation}
where the scalar automorphisms of the line bundles remove all nonzero constants.

We define
\begin{equation}
\cA_d=\left\{a=(a_0,\ldots,a_{d-1})\in\ZZ^d \bigg|
\begin{array}{l}
\sum_{i=0}^{d-1} a_i=1-d,\quad a_{i+1}\geqslant a_i-1\ (0\leqslant  i<d-1),\\
\sum_{i=j}^{d-1}(a_i+1)\leqslant 0\quad(1\leqslant  j<d)
\end{array}\right\}.
\label{eq:Ad}
\end{equation}

\begin{proposition}
\label{prop:classification}
The sheaves supported on $L_1$ and satisfying \eqref{eq:survivors}
are parametrized by $\cA_d$, where the sequence $a\in \cA_d$ corresponds to the
bundle \eqref{eq:chain} with arrows \eqref{eq:arrows}. The set $\cA_d$
is finite, and each corresponding fixed component is an isolated
reduced point. The same statements hold on $L_2$ by a change of coordinates.
\end{proposition}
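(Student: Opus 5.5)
The plan is to translate a $T$-fixed stable sheaf supported on $L_1$ into linear-algebra data on $L_1$, namely the chain \eqref{eq:chain} with arrows \eqref{eq:arrows}, and then to convert each defining property of $M_d$ into a condition on the sequence $a=(a_0,\dots,a_{d-1})$.

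First I set up the correspondence. Since $\supp(F)=L_1$ avoids $p_1$, $F$ lives on $U\cong \Tot_{L_1}\OO(1)$. Sheaves on $U$ that are finite over $L_1$ are equivalent to pairs $(E,\phi)$ with $E=q_*F$ and $\phi\colon E\to E\otimes\OO(1)$ the action of the fibre coordinate $y$, where $\phi$ is nilpotent. Conversely, any nilpotent pair defines such a sheaf on $U$, which extends by zero to $\PP^2$. By Lemma~\ref{lem:string} and the discussion after it, every surviving $F$ is of the form \eqref{eq:chain} with arrows \eqref{eq:arrows}.
\begin{enumerate}
\item Conversely, every chain of this form is torus-equivariant. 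It also satisfies $P_1(F)=0$ and $P_2(F)=\rho(1-n^d)$, so \eqref{eq:survivors} holds automatically.
\item The condition $\chi(F)=\chi(E)=\sum_i(a_i+1)=1$ gives $\sum_i a_i=1-d$.
\item The condition $m_i\geqslant 0$ in \eqref{eq:arrows} gives $a_{i+1}\geqslant a_i-1$.
\item The scalars on the arrows are absorbed by automorphisms of the line bundles $\OO(a_i)$, so distinct $a$ give non-isomorphic sheaves. The sequence is recovered as the splitting type of the $\CC^*_u$-weight spaces.
\end{enumerate}

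The main step is the stability condition. Because the Harder--Narasimhan filtration is unique, stability of a $T$-fixed sheaf can be tested on $T$-equivariant saturated subsheaves. An equivariant subsheaf of $F$ corresponds to a $\phi$-invariant graded subsheaf $\bigoplus_i E_i'$ with $E_i'\subset\OO(a_i)$. Since $\phi$ maps $E_i\to E_{i+1}(1)$ injectively, generically invariance forces the set of nonzero indices to be a tail $\{j,\dots,d-1\}$. Replacing each $E_i'$ by its saturation $\OO(a_i)$ only increases $\chi$ and preserves invariance, so the worst subsheaves are the tails $\bigoplus_{i\geqslant j}\OO(a_i)$, which have multiplicity $d-j$. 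Stability therefore reads
\[
\frac{\sum_{i\geqslant j}(a_i+1)}{d-j}<\frac1d\qquad (1\leqslant j<d).
\]
Because the left-hand numerator is an integer and $(d-j)/d<1$, this is equivalent to $\sum_{i\geqslant j}(a_i+1)\leqslant 0$. This recovers exactly $\cA_d$. Finiteness follows from the inequalities:
\begin{itemize}
\item The tail conditions together with $\sum_i(a_i+1)=1$ give $a_0\geqslant 0$.
\item The condition $a_{i+1}\geqslant a_i-1$ gives $a_i\geqslant a_0-i$.
\item The tail sums are bounded above while each entry is bounded below, which bounds every $a_i$ above as well.
\end{itemize}

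For the statement that each component is an isolated reduced point, I show $\Ext^1(F,F)^T=0$; smoothness of $M_d$ then makes the fixed component a reduced point. An equivariant first-order deformation preserves the $\CC^*_u$-weight decomposition, so it deforms each line bundle $\OO(a_i)$ on $L_1\cong\PP^1$, which is rigid. It also deforms each arrow only within the $T$-weight space of $\mathrm{Hom}(\OO(a_i),\OO(a_{i+1}+1))$ with the correct character. By the argument preceding \eqref{eq:arrows}, that weight space is spanned by $x_2^{m_i}$ alone, and rescaling it is absorbed by automorphisms.

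I expect the main obstacle to be making this tangent computation rigorous, including the $\Ext^1$ between different summands. As a cross-check I would use Lemma~\ref{lem:boundary}, which gives $\mathrm{def}(F)=\mathrm{obs}(F)$ here, and then verify directly that $\mathrm{obs}(F)=0$ by computing the fixed part of $-\chi_T(F,F(-1))$ via localization at $p_2$. The $L_2$ case follows by swapping $x_1\leftrightarrow x_2$ and $u\leftrightarrow v$.
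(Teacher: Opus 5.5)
Your proposal is correct. For the parametrization and for finiteness it matches the paper; it departs in two places.

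\textbf{Stability.} You test only $T$-equivariant saturated subsheaves, justified by uniqueness of the Harder--Narasimhan filtration. That only detects failure of \emph{semi}stability, so you should add that $\gcd(d,\chi(F))=1$ forces semistable to equal stable: a subsheaf of multiplicity $0<r<d$ cannot have slope exactly $1/d$. The paper instead works at the generic point $\kappa=\CC(L_1)$. Because all arrows are nonzero, the module there is $\kappa[z]/(z^d)$, whose nonzero proper submodules are the $(z^j)$. So every saturated subsheaf, equivariant or not, is a tail, and neither HN nor coprimality is needed.

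\textbf{Isolated reduced points.} The paper computes no Ext group. It notes that $1-\mathrm{def}+\mathrm{obs}=\tfrac12\bigl(\Norm{P_1(F)}+\Norm{P_2(F)}\bigr)$ is constant on a connected component of $M_d^T$. Hence a component meeting \eqref{eq:survivors} lies entirely in that locus, which the classification has just shown to be finite, and smoothness of $M_d^T$ makes it a reduced point. Your direct route is more local. It needs neither finiteness of $\cA_d$ nor smoothness, since the Zariski tangent space of $M_d^T$ at $F$ is $\Ext^1(F,F)^T$.

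Your sketch becomes rigorous as follows. The resolution $0\to q^*(E(-1))\xrightarrow{y-\phi}q^*E\to F\to0$ on $U$ gives
\[
0\to\mathrm{Hom}(F,F)\to\mathrm{Hom}(E,E)\xrightarrow{[\phi,-]}\mathrm{Hom}(E,E(1))\to\Ext^1(F,F)\to\Ext^1(E,E)\to\cdots.
\]
Now take $T$-invariants term by term:
\begin{enumerate}
\item All off-diagonal blocks vanish by their nonzero $\CC^*_u$-weight; this disposes of the cross terms you were worried about.
\item Hence $\Ext^1(E,E)^T=\bigoplus_iH^1(\OO_{L_1})=0$.
\item $\mathrm{Hom}(E,E(1))^T=\bigoplus_i\CC\phi_i$, because $H^0(L_1,\OO(m_i))$ has multiplicity-free $T$-weights.
\item The map $(g_i)\mapsto\bigl((g_{i+1}-g_i)\phi_i\bigr)_i$ sends $\CC^d$ onto $\CC^{d-1}$.
\end{enumerate}
Therefore $\Ext^1(F,F)^T=0$. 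Your extra cross-check for $\mathrm{obs}$ is unnecessary: with norm sum $2$, Lemma~\ref{lem:boundary} already gives $\mathrm{obs}(F)=\mathrm{def}(F)=0$.
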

\begin{proof}
The condition $\chi(F)=1$ is $\sum_i(a_i+1)=1$. To test stability,
we work over function field $\kappa=\CC(L_1)$. Since every arrow $\phi_i$ is nonzero, $F$ corresponds to the 
module $\kappa[z]/(z^d)$, whose nonzero proper submodules are exactly
ideals $(z^j)$. 
A saturated subsheaf of $F$ is described as follows: 
consider the subbundle of $E$: 
\[
E_j\oplus\cdots\oplus E_{d-1},\quad E_i=\OO(a_i), 
\]
which is invariant under $\phi$, since the induced map
to the quotient vanishes generically and hence everywhere.  
This defines a subsheaf of $F$. 
By the stability of $F$, we have 
\[
\frac{\sum_{i=j}^{d-1}(a_i+1)}{d-j}<\frac1d\,,
\quad(1\leqslant  j<d).
\]
The numerator is an integer and $0<(d-j)/d<1$, so these are precisely
the last inequalities in \eqref{eq:Ad}.

Conversely, a sequence in $\cA_d$ defines a finite module on $U$ by
\eqref{eq:chain}, \eqref{eq:arrows}. Its underlying bundle is
torsion-free, so the corresponding sheaf on $\PP^2$ is pure. It is annihilated by the
$d$-th power of the normal coordinate and has proper support over $L_1$. The preceding argument proves
stability and $\chi(F)=1$. Normalizing the weights gives
\eqref{eq:string}, so the sheaf satisfies \eqref{eq:survivors}.

When $d=1$, we have $\cA_1=\{(0)\}$. When $d\geqslant 2$, by $\sum_{i=1}^{d-1}(a_i+1)\leqslant 0$ and $\sum_i a_i=1-d$, 
we have $a_0\geqslant 0$. Since $a_i\geqslant a_0-i$, the total sum gives
an upper bound for $a_0$. 
Writing $m_i=a_{i+1}-a_i+1$, we have
\begin{equation}
\sum_{i=0}^{d-2}(d-1-i)m_i
=\frac{(d-1)(d-2)}2-da_0.
\label{eq:weightedpartition}
\end{equation}
This proves finiteness.

Finally, $1-\mathrm{def}(F)+\mathrm{obs}(F)$ is constant on a torus fixed connected component.
By Lemmata~\ref{lem:boundary}, \ref{lem:support}, a component having an element satisfying the equality
\eqref{eq:survivors} must lie in the locus of that equality, which has just been
shown to be finite. The torus fixed locus of a smooth
variety is smooth. Such components are therefore isolated reduced
points. In particular $\mathrm{def}(F)=0$, and \eqref{eq:norm} gives $\mathrm{obs}(F)=0$.
\end{proof}

\subsection{Contribution to invariants from remaining fixed locus}
\label{sec:euler}
We consider sheaves $F$ satisfying \eqref{eq:survivors} and supported
on $L_1$. Recall that  
$$G=\mathbb{L}\iota_{L_0}^*F, \,\,\, \mathrm{hence} \,\,\,P_2(F)=[\mathbb{L}\iota_{p_2}^*G]. $$ 
Note that $p_1\notin \mathrm{supp}(F)$, therefore by Lemma \ref{lem:string} and equivariant Riemann-Roch: 
\begin{align}
\chi_T(G,G)=\frac{(1-n^d)(1-n^{-d})}{1-n}\notag=\sum_{j=0}^{d-1}n^j-\sum_{j=1}^{d}n^{-j}.
\label{eq:Gpairing}
\end{align}
Together with \eqref{QchiFFGG}, \eqref{eq:ext}, this yields
\begin{equation}
[T_FM_d]-[V_{d,F}]
=\sum_{j=1}^{d}n^{-j}-\sum_{j=1}^{d-1}n^j.
\label{eq:virtualdifference}
\end{equation}
Neither representation has zero weights by
Proposition~\ref{prop:classification}. Write $c_1^{T}(n)=s-t$, then 
\begin{equation}
\frac{e_T(T_FM_d)}{e_T(V_{d,F})}
=\frac{\prod_{j=1}^{d}[-j(s-t)]}
{\prod_{j=1}^{d-1}[j(s-t)]}
=(-1)^d d(s-t).
\label{eq:eulerratio}
\end{equation}
The support equation is $x_1^d$, so $h_T(F)=-ds$. The contribution 
of this point to the integral \eqref{eq:localization} is 
\begin{equation}
\frac{-ds}{(-1)^d d(s-t)}=(-1)^d\frac{s}{t-s}. \nonumber \end{equation}
By interchange $s$ and $t$, the contribution from $F$ that support on $L_2$ is $(-1)^d\frac{t}{s-t}$. 
Their sum is 
\begin{equation}
(-1)^d\frac{s}{t-s}+(-1)^d\frac{t}{s-t}=(-1)^{d+1}. 
\label{eq:pair}
\end{equation}

\begin{proposition}
\label{prop:localization}
For every $d\geqslant 1$, we have
\begin{equation}
\int_{M_d}h\cdot e(V_d)=(-1)^{d+1}|\cA_d|. \end{equation}
\end{proposition}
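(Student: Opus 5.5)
The plan is to assemble the pieces already established in \S\ref{sec:vanishing}--\S\ref{sec:euler}. We begin with the localization formula \eqref{eq:localization}. By Proposition~\ref{prop:vanishing}, only torus fixed components whose sheaves satisfy \eqref{eq:survivors} contribute, and all others give zero. The survivors split into two disjoint families: those with set-theoretic support $L_1$ and those with support $L_2$. These are disjoint because $L_1\neq L_2$ as supports, and $\sigma$ is constant on connected components.

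By Proposition~\ref{prop:classification}, the survivors supported on $L_1$ are isolated reduced points in bijection with $\cA_d$. Applying the change of coordinates that swaps $x_1\leftrightarrow x_2$ (and hence $u\leftrightarrow v$, i.e.\ $s\leftrightarrow t$), the survivors supported on $L_2$ are also in bijection with $\cA_d$. The bijection pairs the point $F_a$ on $L_1$ with the point $F'_a$ on $L_2$ for the same $a\in\cA_d$.

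At an isolated reduced fixed point, the integrand in \eqref{eq:localization} reduces to $h_T(F)\,e_T(V_{d,F})/e_T(T_FM_d)$. The fixed parts of both $T_FM_d$ and $V_{d,F}$ vanish, since $\mathrm{def}(F)=\mathrm{obs}(F)=0$. The key observation is that \eqref{eq:virtualdifference} shows the class $[T_FM_d]-[V_{d,F}]$ depends only on $P_2(F)=\rho(1-n^d)$ and not on $a$. Hence the Euler ratio \eqref{eq:eulerratio} is the same for every $a$. There is one point to check here: the ratio $e_T(T)/e_T(V)$ depends only on the virtual difference, because common weights cancel. This holds as long as no weight is zero, which is guaranteed by the vanishing of the fixed parts.

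Similarly, the support cycle of every such $F$ is $dL_1$ with equation $x_1^d$, so $h_T(F)=-ds$ uniformly. Each $L_1$-point therefore contributes $(-1)^d s/(t-s)$, and each $L_2$-point contributes $(-1)^d t/(s-t)$. By \eqref{eq:pair}, each pair $(F_a,F'_a)$ contributes $(-1)^{d+1}$. Summing over $a\in\cA_d$ gives $(-1)^{d+1}|\cA_d|$, which is a constant as required, since the equivariant integral over the compact space $M_d$ lies in $H^0_T(\mathrm{pt})$.

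The main point needing care is the uniformity of the lift $h_T$. The value of $h_T$ at a fixed point depends on the chosen linearization of $\OO(1)$ on $|\OO(d)|$, and we must use the same lift at both $L_1$ and $L_2$ points. Since the paper fixes one natural lift, under which the point $x_1^d$ has weight $-ds$ and $x_2^d$ has weight $-dt$, this is consistent. A second subtlety is the symmetry: the swap $u\leftrightarrow v$ does not preserve the linearization \eqref{eq:line-sequence} of $\OO(-1)$ via $x_0$. Even so, $L_0$, $p_0$ and the roles of $p_1,p_2$ exchange correctly, so the $L_2$ computation is the mirror image of the $L_1$ computation.
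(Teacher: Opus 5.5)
Your proposal is correct and follows the paper's proof: localization, elimination of the other fixed components by Proposition~\ref{prop:vanishing}, identification of the survivors with two copies of $\cA_d$ via Proposition~\ref{prop:classification}, and the pair contribution \eqref{eq:pair}; your remarks that \eqref{eq:eulerratio} and $h_T(F)=-ds$ are independent of $a$ make explicit what the paper leaves implicit. One small correction: the swap $x_1\leftrightarrow x_2$, $u\leftrightarrow v$ does preserve the linearization \eqref{eq:line-sequence}, because $x_0$ is left untouched, so the second subtlety you flag is not actually an issue.
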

\begin{proof}
By Proposition \ref{eq:localization}, the integral reduces to a finite sum over the torus fixed locus. 
Proposition~\ref{prop:vanishing} eliminates all other fixed components and Proposition~\ref{prop:classification} identifies the remaining locus
with two copies of $\cA_d$. By the calculation in \eqref{eq:pair}, we obtain the claim. 
\end{proof}

\subsection{Computation of $|\cA_d|$}

To compute the cardinality of the finite set $\cA_d$, we need the notion of break divisors on a finite graph. Let $K_d$ be the graph with $d$ vertices 
and one edge between vertices $i$ and $j$ for all $1\leqslant i<j\leqslant d$, 
its genus is 
\[
g_d=|E(K_d)|-|V(K_d)|+1=\frac{(d-1)(d-2)}2, 
\]
where $E(K_d)$ and $V(K_d)$ denote edges and vertices of $K_d$ respectively.

We use the following characterization of its break divisors
\cite[\S 2.2, Lem.\,2.4]{KRT}. An integer vector
$b=(b_0,\ldots,b_{d-1})$ is a \textit{break divisor} if and only if
\begin{equation}
\begin{split}
b_i&\geqslant 0,\quad \sum_i b_i=g_d,\\
\sum_{i\in S}b_i&\geqslant g_{|S|} \,\,\, \text{for every nonempty }S\subseteq\{0,\ldots,d-1\}.
\end{split}
\label{eq:breakdef}
\end{equation}
The symmetric group $\mathfrak S_d$ acts on the set $\operatorname{Break}(K_d)$ of break divisors by permuting vertices.

\begin{proposition}
\label{prop:bijection}
The map
\begin{equation}
\cA_d\rightarrow\operatorname{Break}(K_d)/\mathfrak S_d,
\quad
(a_0,\ldots,a_{d-1})\mapsto
[(a_0,a_1+1,\ldots,a_{d-1}+d-1)]
\label{eq:bijection}
\end{equation}
is a bijection for every $d\geqslant 1$.
\end{proposition}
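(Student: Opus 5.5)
The plan is to check that the map in \eqref{eq:bijection} is the substitution $b_i=a_i+i$, and that it identifies $\cA_d$ with the set of \emph{nondecreasing} break divisors of $K_d$. Since every $\mathfrak S_d$-orbit in $\operatorname{Break}(K_d)$ contains exactly one nondecreasing vector (its sorted rearrangement), this gives the bijection with $\operatorname{Break}(K_d)/\mathfrak S_d$. The argument is elementary bookkeeping with the characterization \eqref{eq:breakdef} from \cite[Lem.~2.4]{KRT}.

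\emph{Step 1: translate each condition of $\cA_d$.} Put $b_i=a_i+i$.
\begin{itemize}
\item The total sum transforms as $\sum_i b_i=(1-d)+\tfrac{d(d-1)}{2}=\tfrac{(d-1)(d-2)}{2}=g_d$.
\item The condition $a_{i+1}\geqslant a_i-1$ is exactly $b_{i+1}\geqslant b_i$, so $b$ is nondecreasing.
\item For the tail inequalities, compute
\[
\sum_{i=j}^{d-1}(a_i+1)=\sum_{i=j}^{d-1}b_i-\sum_{m=j-1}^{d-2}m=\sum_{i=j}^{d-1}b_i-(g_d-g_j),
\]
using $g_k=\binom{k-1}{2}$. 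Hence $\sum_{i\geqslant j}(a_i+1)\leqslant 0$ is equivalent to $\sum_{i<j}b_i\geqslant g_j$, for $1\leqslant j<d$.
\end{itemize}

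\emph{Step 2: reduce \eqref{eq:breakdef} to initial segments.} For a nondecreasing $b$ and fixed $|S|=k$, the minimum of $\sum_{i\in S}b_i$ is attained at $S=\{0,\ldots,k-1\}$. So the subset condition in \eqref{eq:breakdef} is equivalent to $\sum_{i<k}b_i\geqslant g_k$ for $1\leqslant k\leqslant d$.
\begin{itemize}
\item The case $k=d$ is the total-sum equality from Step 1.
\item The case $k=1$ gives $b_0\geqslant0$. Monotonicity then yields $b_i\geqslant 0$ for all $i$.
\end{itemize}
So a nondecreasing integer vector is a break divisor if and only if $\sum_i b_i=g_d$ and $\sum_{i<j}b_i\geqslant g_j$ for $1\leqslant j<d$. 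By Step 1, these are precisely the images of the conditions defining $\cA_d$.

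\emph{Step 3: conclude.} The map $a\mapsto b$ is a bijection from $\cA_d$ onto the nondecreasing break divisors, with inverse $a_i=b_i-i$. For surjectivity onto orbits, sort any break divisor into nondecreasing order; this stays in $\operatorname{Break}(K_d)$ by $\mathfrak S_d$-invariance of \eqref{eq:breakdef}, and then apply the inverse. Injectivity holds because two distinct nondecreasing vectors are never permutations of one another.

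The only step requiring care is the index arithmetic in Step 1, namely $\sum_{m=j-1}^{d-2}m=g_d-g_j$. One must also check that the boundary cases $j=1$ (nonnegativity) and $k=d$ (equality) line up, including the degenerate case $d=1$, where $\cA_1=\{(0)\}$ and the only break divisor is $(0)$. I do not expect a genuine obstacle beyond this.
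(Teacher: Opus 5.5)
Your proposal is correct and follows essentially the same route as the paper: substitute $b_i=a_i+i$, match the adjacent inequalities with monotonicity and the tail inequalities with the prefix bounds $\sum_{i<r}b_i\geqslant g_r$, reduce the subset condition to initial segments for nondecreasing vectors, and use the unique sorted representative of each orbit. The only differences are cosmetic: you convert the tail inequalities directly via $\sum_{m=j-1}^{d-2}m=g_d-g_j$, and you obtain nonnegativity from the $k=1$ prefix condition rather than quoting $a_0\geqslant 0$ from Proposition~\ref{prop:classification}.
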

\begin{proof}
For $d=1$ both sets consist of the zero vector. Suppose $d\geqslant 2$ and put
\begin{equation}
b_i=a_i+i.
\label{eq:bchange}
\end{equation}
The adjacent inequalities in \eqref{eq:Ad} are equivalent to
$b_{i+1}\geqslant b_i$. Since $a_0\geqslant 0$, all $b_i$ are nonnegative, and
\begin{equation}
\sum_{i=0}^{d-1}b_i=(1-d)+\frac{d(d-1)}2=g_d.
\label{eq:bsum}
\end{equation}
Subtracting $\sum_{i=r}^{d-1}(a_i+1)\leqslant 0$ from $\sum_i(a_i+1)=1$ gives
$\sum_{i=0}^{r-1}(a_i+1)\geqslant 1$. Equivalently,
\begin{equation}
\sum_{i=0}^{r-1}b_i\geqslant 
1-r+\frac{r(r-1)}2=g_r \,\,\,(1\leqslant  r<d).
\label{eq:bprefix}
\end{equation}
For a nondecreasing vector, the minimum sum over subsets of cardinality
$r$ is the sum of the first $r$ entries. Thus \eqref{eq:bsum} and
\eqref{eq:bprefix} are precisely \eqref{eq:breakdef}.

Conversely, each $\mathfrak S_d$-orbit has a unique nondecreasing
representative. For such a representative put $a_i=b_i-i$. Reversing
the above equalities gives every condition in \eqref{eq:Ad}. This
constructs the inverse.
\end{proof}

We use the following specialization of the orbit-counting theorem of
Konvalinka--Reineke--Tewari.  

\begin{theorem}[{\cite[Theorem 3.7, $m=1$]{KRT}}]
\label{thm:KRT}
For $d\geqslant 2$, we have
\begin{equation}
\left|\operatorname{Break}(K_d)/\mathfrak S_d\right|
=\frac{1}{d^2}\sum_{e\mid d}(-1)^{d+e}\mu(d/e)
\binom{2e-1}{e}.
\label{eq:KRT}
\end{equation}
The same formula has value one for $d=1$.
\end{theorem}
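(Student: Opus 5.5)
The plan is to obtain \eqref{eq:KRT} by specializing the general orbit-counting theorem of Konvalinka--Reineke--Tewari, and to check carefully that our objects match theirs. First, I will recall their setting. They consider the complete multigraph $K_d^{(m)}$, with $m$ edges between each pair of vertices, its break divisors, and the $\mathfrak S_d$-action permuting vertices. For $m=1$ this graph is exactly our $K_d$, with genus $g_d=\binom{d-1}{2}$. By \cite[\S 2.2, Lem.\,2.4]{KRT}, their break divisors for $m=1$ are exactly the integer vectors satisfying \eqref{eq:breakdef}. This uses the standard fact that break divisors are characterized by $\deg(b|_S)\geqslant g(K_d[S])$ for induced subgraphs, and the induced subgraph on $S$ is $K_{|S|}$. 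Hence our $\operatorname{Break}(K_d)/\mathfrak S_d$ is literally their orbit set with $m=1$.

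Next, I will substitute $m=1$ into \cite[Thm.~3.7]{KRT}. Their formula is a M\"obius-type sum over divisors $e\mid d$ of signed binomial coefficients, divided by a power of $d$. The work here is purely bookkeeping. The sign $(-1)^{(m+1)(d+e)}$-type factors, or whatever their exponent is, must reduce to $(-1)^{d+e}$. Their binomial $\binom{(m+1)e-1}{e}$, or the equivalent expression in their normalization, must reduce to $\binom{2e-1}{e}$. The prefactor must become $1/d^2$. I will also check whether their theorem is stated for $d\geqslant 1$ or $d\geqslant 2$, and whether they count orbits of break divisors or of some equivalent objects such as $\mathfrak S_d$-classes of $q$-connected or parking-type vectors; if the latter, I will invoke their bijection. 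For $d=1$, the right-hand side of \eqref{eq:KRT} is $\mu(1)(-1)^{2}\binom{1}{1}=1$. This matches $\cA_1=\{(0)\}$ and the single (zero) break divisor of the one-vertex graph.

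As a sanity check, and as a fallback if the conventions in \cite{KRT} turn out to differ, I will run a self-contained argument via Burnside's lemma:
\[
|\operatorname{Break}(K_d)/\mathfrak S_d|=\frac{1}{d!}\sum_{\sigma\in\mathfrak S_d}|\operatorname{Break}(K_d)^{\sigma}|.
\]
The idea would be to show that only permutations whose cycles all have equal length $d/e$ contribute nontrivially, presumably through a cyclic-lemma-type argument on the subset inequalities. For such $\sigma$, the fixed break divisors would be counted by a lattice-path or ballot count of size $\binom{2e-1}{e}$ up to normalization. Summing over cycle types and applying M\"obius inversion would then give the stated formula. I expect this fallback to be the main obstacle, specifically the computation of $|\operatorname{Break}(K_d)^{\sigma}|$ for general $\sigma$. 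This is exactly the content of \cite{KRT}, so in practice I will rely on the citation and only verify small cases $d\leqslant 4$ by hand against $|\cA_d|$. For example, for $d=2$ the formula gives $\frac14\bigl(\binom{3}{2}-1\bigr)=\frac12\cdot 1=\ldots$; this check will also fix any residual normalization discrepancy.
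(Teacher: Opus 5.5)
Your main line is what the paper does: the paper gives no argument of its own, but quotes \cite[Thm.~3.7]{KRT} at $m=1$, relying on the characterization \eqref{eq:breakdef} of break divisors of $K_d$. The $d=1$ value is a one-line check, which you do correctly. Three of your side remarks are wrong, though, and you should not rely on them.

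\textbf{The $d=2$ check.} The $e=1$ term is $(-1)^{3}\mu(2)\binom{1}{1}=+1$, not $-1$. So the right-hand side is $\tfrac14(3+1)=1$. This matches the unique break divisor $(0,0)$ of $K_2$ (genus $0$) and $\cA_2=\{(0,-1)\}$, so there is no normalization discrepancy to fix.

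\textbf{The Burnside fallback.} It is false that only permutations whose cycles all have equal length contribute. For $d=3$ the break divisors are the three unit vectors. Each transposition fixes one of them and the $3$-cycles fix none, so the count is $(3+3\cdot 1+2\cdot 0)/6=1$; discarding the transpositions would give $1/2$. That sketch would need a genuinely different mechanism to produce the M\"obius sum in \eqref{eq:KRT}.

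\textbf{The sign.} Your guessed exponent $(-1)^{(m+1)(d+e)}$ is identically $1$ at $m=1$, so it cannot specialize to $(-1)^{d+e}$. Read the exponent off \cite{KRT} rather than guessing it.
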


\begin{proof}[Proof of Theorem~\ref{thm on g=0}]
By Proposition~\ref{prop:bijection}, Theorem~\ref{thm:KRT}, and
$\binom{2e-1}{e}=\frac12\binom{2e}{e}$, we have
\begin{align}
|\cA_d|
&=\frac{1}{2d^2}\sum_{e\mid d}
(-1)^{d+e}\mu(d/e)\binom{2e}{e}\notag\\
&=\frac{(-1)^d}{2d^2}\sum_{k\mid d}
\mu(k)(-1)^{d/k}\binom{2d/k}{d/k}.
\label{eq:Nclosed}
\end{align}
The second equality uses $e=d/k$.  
Applying Proposition~\ref{prop:localization} gives
$$\int_{M_d}h\cdot e(V_d)=-\frac{1}{2d^2}\sum_{k\mid d}
\mu(k)(-1)^{d/k}\binom{2d/k}{d/k}. $$
This proves the claimed equality.

As for the required correspondence with GW invariants, we put $B_m=(-1)^m\binom{2m}{m}$. Then
\begin{align*}
\sum_{k\mid d}\frac{\langle\tau_0(\pi^*H^2) \rangle_{d/k}^{\DT_4}}{k^2}
&=\sum_{k\mid d}\frac{1}{k^2}\frac{1}{2(d/k)^2}
\sum_{\ell\mid d/k}\mu(\ell)B_{d/(k\ell)}\notag\\
&=\frac{1}{2d^2}\sum_{k\ell\mid d}\mu(\ell)B_{d/(k\ell)}\notag\\
&=\frac{1}{2d^2}\sum_{r\mid d}B_{d/r}
\sum_{\ell\mid r}\mu(\ell)\notag\\
&=\frac{B_d}{2d^2}.
\label{eq:coverproof}
\end{align*}
Here the third equality groups terms by $r=k\ell$, and the fourth
uses $\sum_{\ell\mid r}\mu(\ell)=\delta_{r,1}$.
The last equality uses the GW formula of \cite[Prop.~2]{KP}: 
\begin{equation*}
\mathrm{GW}_{0,d}(\pi^*H^2)
=\frac{(-1)^d}{2d^2}\binom{2d}{d}.
\qedhere
\end{equation*}

\end{proof}

\section{Proof of Theorem \ref{thm on g=1}}\label{sect on proof of thm g=1}
Notations as used in the previous section. 
For $a=(a_0,\ldots,a_{d-1})\in\cA_d$, put
\begin{equation}
S_1(a)=\sum_{i=0}^{d-1}i(a_i+1),\quad
S_2(a)=\sum_{i=0}^{d-1}a_i(a_i+1).
\label{d:eq:statistics}
\end{equation}
Let $\nu_i=u^iv^{a_0-a_i-i}$ with $c^T_1(\nu_i)=is+(a_0-a_i-i)t$. The equivariant resolution of $F\in M_d^{T}$ is
\begin{equation}
0\longrightarrow\bigoplus_i\OO(a_i-1)\otimes \nu_i u
\longrightarrow\bigoplus_i\OO(a_i)\otimes \nu_i
\longrightarrow F\longrightarrow0.
\label{d:eq:resolution}
\end{equation}

\begin{lemma}
For $F\in M_d^{T}$ presented as above, we have 
\begin{equation}
c_1(\det \mathbb{R}\pi_{M*}\mathbb{F})|_F=S_1(a)s+\left(a_0-S_1(a)-\frac{S_2(a)}2\right)t.
\label{d:eq:determinant}
\end{equation}
An equivariant lift of \eqref{equ on tau1} has restriction
\begin{equation}
\tau_1(H)|_F
=d\left(\frac d2-S_1(a)\right)s
+d\left(\frac{3-d}{2}+S_1(a)+\frac{S_2(a)}2\right)t.
\label{d:eq:tauweight}
\end{equation}
\end{lemma}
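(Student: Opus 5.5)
The plan is to compute both restrictions directly from the equivariant resolution \eqref{d:eq:resolution}. At the fixed point $[F]$, the universal sheaf restricts to $F$ with its equivariant structure, and its class in $K_T(\PP^2)$ is
\[
[F]=\sum_{i=0}^{d-1}\nu_i\bigl([\OO(a_i)]-u\,[\OO(a_i-1)]\bigr).
\]
Both $c_1(\det \mathbb{R}\pi_{M*}\mathbb{F})|_F$ and $\pi_{M*}(\cdots)|_F$ are then equivariant pushforwards to a point of expressions in $\ch_T(F)$ and $H_T$, and I would evaluate them termwise. First I would fix the equivariant lift of $H$ compatible with the linearization \eqref{eq:line-sequence}, so that $H_T|_{p_0}=0$, $H_T|_{p_1}=s$ and $H_T|_{p_2}=t$ (up to the sign convention fixed by \eqref{equ on torus action}). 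I would check this against the earlier computation $h_T(F)=-ds$ for support $x_1^d$, and against $c_1^T(n)=s-t$. One point to verify is that the resolution is genuinely equivariant with the stated twists: the arrows are multiplication by $x_1$ (which contributes the factor $u$) and by $x_2^{m_i}$, and $\nu_{i+1}/\nu_i$ must match the weight of $x_2^{m_i}$. This is consistent with $a_0-a_i-i$ telescoping and with the character $v^{-1}$ of $x_2$.

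For \eqref{d:eq:determinant}, note that $c_1(\det \mathbb{R}\Gamma(F))$ is the sum of the $T$-weights of $\chi_T(\PP^2,F)$ counted with multiplicity. This equals $\sum_i c_1^T(\nu_i)\,\chi(\OO(a_i)-\OO(a_i-1))$ plus the weights internal to $\chi_T(\OO(a))$ and $\chi_T(\OO(a-1))\otimes u$. I would compute these via the degree-one part of equivariant Riemann–Roch, namely $\int_{\PP^2}\ch_T(F)\,\mathrm{td}_T(\PP^2)$ in degree $2$ of $H^*_T$, or more concretely by localization at $p_0,p_1,p_2$. Using $\chi(\OO(a)-\OO(a-1))=a+1$ and the weights $c_1^T(\nu_i)=is+(a_0-a_i-i)t$, the $\nu_i$ contribution is $\sum_i(a_i+1)(is+(a_0-a_i-i)t)$. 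Since $\sum_i(a_i+1)=1$, this collects into $S_1(a)s+(a_0-S_1(a)-\sum_i a_i(a_i+1))t$. The remaining internal terms, coming from $u$ and the equivariant twist of $\OO(a)$ (on the line $L_1$, where $t$ is the relevant nontrivial weight), should contribute $+\tfrac12 S_2(a)\,t$ and so correct $-S_2$ to $-S_2/2$. This bookkeeping of sign conventions, including which weight the fiber $u$ carries, is where I expect errors; I would check it on $d=1$, $a=(0)$, $F=\OO_{L_1}$.

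For \eqref{d:eq:tauweight}, I would plug into \eqref{equ on tau1}. The term $\pi_{M*}(H_T\ch_2^T(F)+\tfrac32H_T^2\ch_1^T(F))$ is computed by Atiyah–Bott localization on $\PP^2$, or directly using $\ch^T_1(F)=d\,[L_1]_T$ with $[L_1]_T=H_T+(\text{weight of }x_1)$. For $\ch_2^T$, I would expand $\ch_T(\OO(a_i)\nu_i)(1-e^{-H_T}u)$ to second order. Integrating $H_T\cdot$ against the resulting classes gives linear forms in $s,t$, and then I subtract $d$ times \eqref{d:eq:determinant}. Since the result is independent of the choice of universal sheaf, no normalization issue arises: shifting all $\nu_i$ by a common character changes both terms compatibly, and I would verify this cancellation as a consistency check, since the $d\,c_1(\det)$ term is designed for exactly that. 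The main obstacle is keeping the equivariant conventions consistent (the lift of $H$, the character of $u$, and the orientation of $n$); I would fix them once from the $h_T(F)=-ds$ and $c_1^T(n)=s-t$ normalizations and confirm the final formula for $d=1,2$ by hand.
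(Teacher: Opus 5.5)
Your proposal is correct and follows essentially the same route as the paper: expand $[F]$ via the resolution \eqref{d:eq:resolution} and compute $c_1$ of $\chi_T$ pair by pair. The term you left as ``should contribute'' is exactly $\binom{a_i+1}{2}t$ per pair, since $[\OO(a_i)]-u[\OO(a_i-1)]=[\OO_{L_1}(a_i)]$, which gives the $+\tfrac12 S_2(a)t$ correction; then evaluate $\pi_{M*}\bigl(H\ch_2+\tfrac32H^2\ch_1\bigr)$ by localization and subtract $d$ times \eqref{d:eq:determinant}, as the paper does via the closed formula $c_1\bigl(R\Gamma_T(\OO(b)\otimes\CC_w)\bigr)=\binom{b+2}{2}w+\binom{b+2}{3}(s+t)$ and the pushforwards $\pi_*H^2=1$, $\pi_*H^3=s+t$.
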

\begin{proof}
With the torus action \eqref{equ on torus action}, for every $b\in \mathbb{Z}_{\geqslant 0}$, we have 
$$R\Gamma_T(\PP^2,\OO(b))=\sum_{i,j\geqslant 0,\, i+j\leqslant b}u^i v^j, $$
then 
\[
c_1\bigl(R\Gamma_T(\PP^2,\OO(b))\bigr)=\frac{b(b+1)(b+2)}{6}(s+t).
\]
If we twist $\OO(b)$ by a character $w$, we have 
$$c_1\bigl(R\Gamma_T(\PP^2,\OO(b)\otimes \mathbb{C}_{w})\bigr)=\frac{(b+1)(b+2)}{2}w+\frac{b(b+1)(b+2)}{6}(s+t). $$
By Serre duality, the same formula holds if $b\in \mathbb{Z}$. 
Applying this to the $i$-th pair of terms in
\eqref{d:eq:resolution} gives
\[
(a_i+1)c^T_1(\nu_i)+\frac{a_i(a_i+1)}2t.
\]
Summing over $i$ and using $\sum_i(a_i+1)=1$, we obtain
\eqref{d:eq:determinant}.

Choose $H$ with restrictions $0,s,t$ at the three
fixed points, then $\pi_{M*}H^2=1$ and $\pi_{M*}H^3=s+t$. Write
$$[F]=\sum_\alpha\epsilon_\alpha \mathbb{C}_{z_\alpha}\OO(b_\alpha H),$$ 
then 
\begin{equation}
\begin{split}
\pi_{M*}\left(H\ch_2(\mathbb{F})+\frac32H^2\ch_1(\mathbb{F})\right)\bigg|_F=
\sum_\alpha\epsilon_\alpha b_\alpha z_\alpha
+\frac{s+t}{2}\sum_\alpha\epsilon_\alpha b_\alpha^2+\frac32\left((s+t)\sum_\alpha\epsilon_\alpha b_\alpha
+\sum_\alpha\epsilon_\alpha z_\alpha\right).
\end{split}
\label{d:eq:unnormexpand}
\end{equation}
In the $i$-th pair of \eqref{d:eq:resolution}, we take
$(b,z)=(a_i,c^T_1(\nu_i))$ and $(a_i-1,c^T_1(\nu_i)+s)$ with signs $+$ and $-$. Taking a sum over $i$ and subtracting $d$ times \eqref{d:eq:determinant} gives \eqref{d:eq:tauweight}.
\end{proof}

\begin{proposition}
\label{d:prop:descendent}
With orientation \eqref{equ on Md vir class}, for any $d\geqslant 1$, the descendent invariant is
\begin{equation}
\langle\tau_1(\pi^*H) \rangle_{d}^{\DT_4}=(-1)^{d+1}\sum_{a\in\cA_d}
\left(d-\frac32-2S_1(a)-\frac{S_2(a)}2\right).
\label{d:eq:weighted}
\end{equation}
\end{proposition}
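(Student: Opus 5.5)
The plan is to rerun the localization argument of \S\ref{sec:euler}, with the insertion $h$ replaced by the equivariant lift of $\tau_1(\pi^*H)$ from the preceding lemma. By the lemma expressing the invariants as integrals over $M_d$, we have $\langle\tau_1(\pi^*H)\rangle^{\DT_4}_d=-\int_{M_d}\tau_1(\pi^*H)\cdot e(V_d)$. The integrand has degree $\dim M_d$, so the equivariant integral lies in $H^0_T(\mathrm{pt})=\QQ$. It is therefore independent of the chosen lift, and we may use the lift \eqref{d:eq:tauweight}.

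The first step is to reuse the vanishing. Proposition~\ref{prop:vanishing} only uses the factor $c_{\mathrm{obs}(F)}((V_d|_B)^T)=0$ of $e_T(V_d|_B)$, and it does not depend on the insertion. So again only the fixed points of Proposition~\ref{prop:classification} contribute. These are two copies of $\cA_d$ (one supported on $L_1$, one on $L_2$), and each is an isolated reduced point. For a point $F_a$ on $L_1$, \eqref{eq:eulerratio} gives $e_T(T_F M_d)/e_T(V_{d,F})=(-1)^d d(s-t)$. Writing $\tau_1(H)|_{F_a}=d(A_a s+B_a t)$ with
\[
A_a=\tfrac d2-S_1(a),\qquad B_a=\tfrac{3-d}{2}+S_1(a)+\tfrac{S_2(a)}{2},
\]
its contribution to $\int_{M_d}\tau_1\cdot e(V_d)$ is $(A_a s+B_a t)/\bigl((-1)^d(s-t)\bigr)$.

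The second step handles the points supported on $L_2$. Here I would use the automorphism of $\PP^2$ swapping $x_1\leftrightarrow x_2$. It intertwines the torus action with the swap $u\leftrightarrow v$, fixes $L_0$ and the linearization \eqref{eq:line-sequence}, and preserves $H$ (with restrictions $0,s,t$ going to $0,t,s$). Hence the corresponding $L_2$-point $F'_a$ has the same data with $s$ and $t$ interchanged. Its contribution is $(A_a t+B_a s)/\bigl((-1)^d(t-s)\bigr)$. Adding the two contributions, the denominators cancel:
\[
\frac{(A_a-B_a)(s-t)}{(-1)^d(s-t)}=(-1)^d(A_a-B_a),\qquad A_a-B_a=d-\tfrac32-2S_1(a)-\tfrac{S_2(a)}{2}.
\]
Summing over $a\in\cA_d$ and applying the overall sign from \eqref{equ on Md vir class} gives \eqref{d:eq:weighted}.

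I expect the main obstacle to be the symmetry step. One must check that the weights in \eqref{d:eq:tauweight}, in particular the determinant correction \eqref{d:eq:determinant}, really transform by $s\leftrightarrow t$ under the swap. The normalization of the equivariant structure (the overall character $\nu_0$) is chosen asymmetrically, so this needs care. If the symmetry argument is delicate, I would instead recompute directly from a resolution analogous to \eqref{d:eq:resolution} with $u,v$ swapped. I would also use that the final answer cannot depend on the overall character, since $\tau_1$ is built from $\mathbb{F}_{\mathrm{norm}}$, and that the total sum must be a constant. That constancy already forces the $s$ and $t$ coefficients to combine exactly as above.
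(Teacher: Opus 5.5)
Your proposal is correct and follows the paper's proof: the insertion-independent vanishing and the classification leave two copies of $\cA_d$, \eqref{eq:eulerratio} gives the $L_1$-contributions, the $L_2$-contributions come from $s\leftrightarrow t$, and each pair sums to $(-1)^{d+1}(U-V)/d$ with $U=dA_a$, $V=dB_a$. The one addition is that you justify the swap, using the automorphism $x_1\leftrightarrow x_2$ (which fixes $x_0$ and $H$) and the independence of the normalized lift from the overall character, whereas the paper simply asserts it; note that this symmetry, not the constancy of the total, is what determines the $L_2$-weights, since constancy alone gives only one linear relation.
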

\begin{proof}
Write \eqref{d:eq:tauweight} as $Us+Vt$. Its restriction at the other
point is $Ut+Vs$. By \eqref{eq:eulerratio}, the contribution of the pair,
including the sign from orientation, is
\[
-\frac{Us+Vt}{(-1)^d d(s-t)}
-\frac{Ut+Vs}{(-1)^d d(t-s)}
=(-1)^{d+1}\frac{U-V}{d}.
\]
Since $(U-V)/d=d-3/2-2S_1(a)-S_2(a)/2$, summing over points in $\cA_d$, we obtain 
$$\langle\tau_1(\pi^*H) \rangle_{d}^{\DT_4}=(-1)^{d+1}\sum_{a\in\cA_d}
\left(d-\frac32-2S_1(a)-\frac{S_2(a)}2\right). $$
By elementary computations, we obtain the claimed equality in Theorem \ref{thm on g=1}. 
\end{proof}

\providecommand{\bysame}{\leavevmode\hbox to3em{\hrulefill}\thinspace}
\providecommand{\MR}{\relax\ifhmode\unskip\space\fi MR }
\providecommand{\MRhref}[2]{%
 \href{http://www.ams.org/mathscinet-getitem?mr=#1}{#2}}
\providecommand{\href}[2]{#2}

\end{document}